\newcommand{\complex}{\mathbb{C}}
\newcommand{\real}{\mathbb{R}}
\newcommand{\nat}{\mathbb{N}}
\renewcommand{\Re}{\mbox{Re}\,}
\renewcommand{\Im}{\mbox{Im}\,}
\newcommand{\Mb}[1]{\left[{#1}\right]}
\newcommand{\Mcb}[1]{\left\{{#1}\right\}}
\newcommand{\Pq}{Q}
\newcommand{\Wq}{G}
\newcommand{\Ps}{P}
\newcommand{\Ws}{F}
\newcommand{\fq}{g}
\newcommand{\fs}{f}
\newcommand{\DG}{\nabla}
\newcommand{\cF}{\mathcal{F}}
\newcommand{\cG}{\mathcal{G}}
\newcommand{\cQ}{\mathcal{Q}}
\DeclareMathOperator{\expect}{\mathbb{E}}
\DeclareMathOperator{\proba}{\mathbb{P}}
\DeclareMathOperator{\linspan}{span}
\DeclareMathOperator{\nullspace}{\mathcal{N}}
\DeclareMathOperator{\colspace}{\mathcal{R}}
\newtheorem{example}[theorem]{Example}
\newtheorem{remark}[theorem]{Remark}
\title{On the solvability of the discrete\\ conductivity and
Schr\"odinger inverse problems}
\author{Justin Boyer\footnotemark[2]\ \footnotemark[4]
\and Jack J. Garzella\footnotemark[3]
\and Fernando Guevara Vasquez\footnotemark[2]\ \footnotemark[4]}
\begin{document}
\maketitle

\renewcommand{\thefootnote}{\fnsymbol{footnote}}
\footnotetext[2]{Mathematics dept., University of Utah, 155 S 1400 E RM
233, Salt Lake City UT 84112-0090.}
\footnotetext[3]{Juan Diego Catholic High School, 300 E 11800 S, Draper,
Utah 84020.}
\footnotetext[4]{Supported by the National Science Foundation
grant DMS-1411577.}
\begin{abstract}
We study the uniqueness question for two inverse problems on graphs.
Both problems consist in finding (possibly complex) edge or nodal based
quantities from boundary measurements of solutions to the Dirichlet
problem associated with a weighted graph Laplacian plus a diagonal
perturbation. The weights can be thought of as a discrete conductivity
and the diagonal perturbation as a discrete Schr\"odinger potential. We
use a discrete analogue to the complex geometric optics approach to show
that if the linearized problem is solvable about some conductivity (or
Schr\"odinger potential) then the linearized problem is solvable for
almost all conductivities (or Schr\"odinger potentials) in a suitable
set. We show that the conductivities (or Schr\"odinger potentials) in a
certain set are determined uniquely by boundary data, except on a zero
measure set. This criterion for solvability is used in a statistical
study of graphs where the conductivity or Schr\"odinger inverse problem
is solvable.
\end{abstract}
\begin{keywords}
weighted graph Laplacian, 
resistor networks, 
discrete Schr\"odinger problem,
recoverability
\end{keywords}
\begin{AMS}
 05C22, 
 05C50, 
 35R30, 
 05C80 
\end{AMS}

\pagestyle{myheadings}
\thispagestyle{plain}
\markboth{J. BOYER, J.~J. GARZELLA AND F. GUEVARA VASQUEZ}{SOLVABILITY OF DISCRETE INVERSE PROBLEMS}


\section{Introduction}

We consider the problem of finding nodal and edge quantities in a graph
from measurements made at a few nodes that are accessible.  To give a
concrete example consider the electrical circuit given in
figure~\ref{fig:circuit}. The only nodes we have access to are in white
and are called {\em boundary} nodes, while the nodes in black are
inaccessible and are called {\em interior} nodes. Each edge $e$ between
two interior nodes in the graph corresponds to an electrical component
with {\em complex conductivity} or {\em admittivity} $\gamma(e)$ (the
reciprocal of the {\em impedivity}). A complex valued voltage difference
$V$ across such a component is related to the current $I$ traversing the
component by Ohm's law $I = \gamma(e) V$.  Moreover each internal node
$v$ is joined to the ground (a zero voltage internal node) by a complex
conductivity $q(v)$ that we call Schr\"odinger potential or leak. We
consider the solvability question for the following two discrete inverse
problems, i.e. whether the data uniquely determines the unknown.
\begin{itemize}
 \item[] {\bf Discrete inverse conductivity problem}:
 Assuming $q=0$, find the admittances $\gamma$
 from electrical measurements at the boundary nodes.

 \item[] {\bf Discrete inverse Schr\"odinger problem}: 
 Assuming $\gamma$ is known, find the leaks $q$ from electrical
 measurements at the boundary nodes.
\end{itemize}

For the conductivity inverse problem we show that when $\Re \gamma > 0$,
the solvability question depends only on the topology of the underlying
graph (i.e. the circuit layout). We give also an easy criterion to
identify the graphs on which the conductivity can be recovered. To
explain this criterion, consider the forward map $\cF$ that to $\gamma$
associates the measurements at the boundary nodes (which are explained
in detail in \S\ref{sec:dsc}). We show that if for some conductivity
$\rho$ with $\Re \rho>0$ the Jacobian $D_\gamma \cF[\rho]$ is injective
then we have uniqueness for almost all other pairs of conductivities
$\gamma_1,\gamma_2$ with positive real part, i.e. $\cF(\gamma_1) =
\cF(\gamma_2)$ implies $\gamma_1 = \gamma_2$ except for a set of measure
zero. Thus we can only guarantee that the equivalence classes for the
equivalence relation $\cF(\gamma_1) = \cF(\gamma_2)$ are of measure zero.
In particular this means that there may be lower dimensional manifolds
(such as segments) of conductivities that share the same data
$\cF(\gamma)$, but these sets must have an empty interior.

We prove a similar result for the Schr\"odinger inverse problem when
$\Re \gamma >0$. Let $\cQ$ be the forward map that to the Schr\"odinger
potential $q$ associates the measurements at the boundary nodes. Then if
for some leak $p$ with $\Re p \geq 0$ the Jacobian $D_q \cQ[p]$ is
injective, then we have uniqueness for almost all other pairs of
Schr\"odinger potentials with non-negative real part. ($\Re q$
can be negative up to a certain extent dictated by $\gamma$).

\begin{figure}
 \begin{center}
 \includegraphics[width=0.3\textwidth]{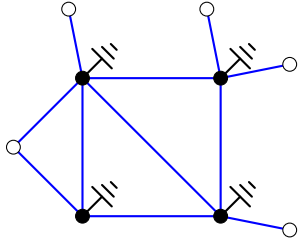}
 \end{center}
 \caption{A circuit with boundary nodes in white and interior nodes in
 black.  Each interior node is connected to the ground, i.e. a single
 node with constant zero potential. The edges in blue correspond to a
 conductivity $\gamma$ and the edges in black to a Schr\"odinger
 potential $q$.} 
 \label{fig:circuit}
\end{figure}

\subsection{Applications}
The problem of finding the conductivities in a graph arises in
applications such as geophysical exploration \cite{Dines:1981:AEC} and
medical imaging in a modality called electrical impedance tomography or
EIT, for a review see \cite{Borcea:2012:RNA}. The methods described in
\cite{Borcea:2012:RNA} are limited to 2D setups and real conductivities
because they rely on results for the discrete inverse conductivity
problem on graphs that were only known to hold for positive
conductivities and for planar graphs. However, data from EIT is usually
not static, and is usually taken for different frequencies because
admittivity can help distinguish between different types of tissue (see
e.g. the reviews \cite{Cheney:1999:EIT,Borcea:2002:EIT}). With the
results presented here, we expect that the methods in
\cite{Borcea:2012:RNA} can be generalized to handling 3D setups, complex
conductivities (admittivities) and the Schr\"odinger problem. As far as
the continuum Schr\"odinger problem is concerned, some of the techniques
in \cite{Borcea:2012:RNA} are used in the forthcoming
\cite{Borcea:2015:NSI} to design a numerical method for solving the
two-dimension Schr\"odinger problem in the (real) absorptive case which
arises in e.g. diffuse optical tomography (see e.g. the review
\cite{Arridge:1999:OTM}).

\subsection{Contents} \label{sec:contents}
We start in \S\ref{sec:related} by giving a review of available
uniqueness results for inverse problems on graphs. Then in
\S\ref{sec:cgo} we explain the parallels between our method and
the complex geometric optics approach (in the continuum). This section
is self-contained and its purpose is to motivate our approach.
The forward and inverse
discrete problems that we consider are defined in \S\ref{sec:dsc}.
The uniqueness result for the conductivity problem is in
\S\ref{sec:cond} and for the Schr\"odinger problem in
\S\ref{sec:schroe}. Numerical experiments, including a statistical study of graphs
where either of these inverse problems can be solved are in \S\ref{sec:numerics}.
Interestingly, this study reveals that graphs on which the Schr\"odinger
problem is solvable are much more common than graphs where the
conductivity problem is solvable.  We conclude
with a discussion in \S\ref{sec:discussion}. Because of their similarity
to their conductivity problem analogues, we defer the proof of some
results for the Schr\"odinger problem to Appendix~\ref{sec:qbur}.

\section{Related work}\label{sec:related}
We review uniqueness results for the discrete inverse conductivity
problem (\S\ref{sec:rc}), for the discrete inverse Schr\"odinger
problem (\S\ref{sec:rq}) and finally some results on infinite
lattices (\S\ref{sec:lattices}).

\subsection{Discrete inverse conductivity problem}
\label{sec:rc}
The first uniqueness result for the discrete inverse conductivity
problem that we are aware of is the constructive algorithm of Curtis and Morrow
\cite{Curtis:1990:DRN} to find a positive real conductivity in a
rectangular graph. Then Curtis, Mooers and Morrow \cite{Curtis:1994:FCC}
gave a constructive algorithm to find the positive real conductivity
in certain circular planar networks. This result was generalized
independently by Colin de Verdi\`ere \cite{Colin:1994:REP} and by
Curtis, Ingerman and Morrow \cite{Curtis:1998:CPG} to encompass positive
real conductivities on circular planar graphs, i.e. all the graphs that
can be embedded in a plane without edge crossings and for which the
boundary (or accessible) nodes lie on a circle. The condition for
recoverability is a condition on the connectedness of the graph and does
not depend on the conductivity (see also
\cite{Colin:1996:REP,Colin:1998:SG,curtis:2000:IPE}).  

Other results by Chung and Berenstein \cite{Chung:2005:HFI} and Chung
\cite{Chung:2010:IRE} are not limited to circular planar graphs, but
assume monotonicity and positive real conductivities. These results are
discrete analogous of the uniqueness result of Alessandrini
\cite{Alessandrini:1989:RPB} for the continuum conductivity problem (see
also \cite[\S 4.3]{Isakov:2006:IPD}). In a nutshell, the result of Chung
\cite{Chung:2005:HFI} guarantees that if two conductivities $\gamma_1
\leq \gamma_2$ (the inequality being componentwise) agree in a
neighborhood of the boundary nodes and one measurement of the potential
and the corresponding current at the boundary is identical for both
conductivities then $\gamma_1 = \gamma_2$. The result
\cite{Chung:2010:IRE} is an extension to other kinds of measurements.

We point also to the work by Lam and Pylyavskyy \cite{Lam:2012:IPC},
which uses combinatorics to study networks that can be embedded in a
cylinder (i.e.  the nodes and edges lie on the cylinder's surface with
no edges crossing) and with boundary nodes at the two ends of the
cylinder.  They show that positive real conductivities cannot be
uniquely determined from boundary data.

To our knowledge, our uniqueness result is the first that deals with
complex conductivities and that is not limited to circular planar
graphs. However our result is weaker than that in
\cite{Colin:1994:REP,Curtis:1998:CPG} in the sense that we do not show
uniqueness for all conductivities with positive real part, but we do
show that the set of pairs of conductivities $\gamma_1,\gamma_2$ that we
cannot tell apart from boundary data is a set of measure zero.

\subsection{Discrete inverse Schr\"odinger problem}
\label{sec:rq}
The inverse Schr\"odinger problem on circular planar graphs has been
studied by Ara\'uz, Carmona and Encinas
\cite{Arauz:2014:DRM,Arauz:2015:OPB, Arauz:2015:DRM}, and they also give
conditions for which real Schr\"odinger potentials in a certain class
(that makes them essentially equivalent to conductivities) can be
determined from boundary data. Although our result is stronger in the
sense that we allow for complex Schr\"odinger potentials and topologies
that are not necessarily planar, our result is also weaker in the sense
that we only have uniqueness up to a zero measure set of pairs of
Schr\"odinger potentials $q_1,q_2$ that are indistinguishable from
boundary data.

\subsection{Infinite lattices}
\label{sec:lattices}
We also note that there are related uniqueness results for infinite
graphs (i.e. lattices) for both the inverse Schr\"odinger problem
\cite{Morioka:2011:IBV, Isozaki:2012:IPT, Ando:2013:IST} and
conductivity problem \cite{Ervedoza:2011:USE}. The latter being perhaps
the closest to our results because it uses the complex geometric optics
approach.

\section{Relation with the complex geometric optics approach}
\label{sec:cgo}
We give here a brief summary of the uniqueness proof for the inverse
Schr\"odinger problem by Sylvester and Uhlmann
\cite{Sylvester:1987:GUT}. The intention of this summary is not to be
complete or general, but to highlight the steps in the proof that have a
discrete analogue in our argument. For reviews focussed on the complex
geometric optics (CGO) approach for the inverse conductivity problem see
e.g.  \cite{Uhlmann:2009:EIT,Salo:2008:CP}, for other inverse problems
see e.g. \cite{Uhlmann:1999:DIP}. Finally note that this is the only
section where we use functions defined on a continuum, elsewhere
functions are defined on a finite sets. Hence the notation is exclusive
of this section.

\subsection{The continuum inverse Schr\"odinger problem}
Let $\Omega \subset \real^d$ be a domain with smooth boundary
$\partial\Omega$ in dimension $d\geq 3$ and let $q \in L^2(\Omega)$ be
a, possibly complex valued, function that we shall call Schr\"odinger
potential. The Dirichlet problem for the Schr\"odinger equation is given
the Dirichlet data $f \in H^{1/2}(\partial\Omega)$, find $u \in
H^1(\Omega)$ such that
\begin{equation}
\label{eq:schroe}
 - \Delta u + q u = 0~\text{in $\Omega$},~\text{and}~
 u  = f ~\text{on $\partial\Omega$}.
\end{equation}
The Dirichlet to Neumann map is the linear mapping $\Lambda_q : H^{1/2}
(\partial\Omega) \to H^{-1/2}(\partial\Omega)$ defined for $f \in
H^{1/2}(\partial\Omega)$ by
\begin{equation}
 \Lambda_q f = n \cdot \nabla u,
\end{equation}
where $u$ solves \eqref{eq:schroe} with Dirichlet boundary condition $f$
and $n$ is the unit normal to $\partial\Omega$ pointing outwards. We
assume here that the Schr\"odinger potential $q$ is such that the
Dirichlet problem \eqref{eq:schroe} admits a unique solution for all
boundary excitations $f\in H^{1/2}(\partial\Omega)$. The {\em inverse
Schr\"odinger problem} is to find $q$ from the Dirichlet to Neumann map
$\Lambda_q$.

\subsection{An interior identity}
The first step in the uniqueness proof of Sylvester and Uhlmann
\cite{Sylvester:1987:GUT} for the Schr\"odinger inverse problem is to
show an identity that relates a difference in boundary data to a
difference in Schr\"odinger potentials times products of solutions to
the Dirichlet problem:
\begin{equation}
\label{eq:ii}
\int_{\partial\Omega} (\Lambda_{q_1} - \Lambda_{q_2}) (f_1)  f_2  dS =
\int_\Omega (q_1 - q_2) u_1 u_2 dx,
\end{equation}
where $u_j$ solves the Dirichlet problem \eqref{eq:schroe} with boundary
data $f_j \in H^{1/2}(\partial\Omega)$, for $j=1,2$. 

\subsection{Density of products of solutions}
The second step is to prove that products of solutions to the Dirichlet
problem \eqref{eq:schroe} are dense in some appropriate space, say
$L^2(\Omega)$. If this were true and we had two Schr\"odinger potentials
$q_1$ and $q_2$ for which $\Lambda_{q_1} = \Lambda_{q_2}$ we could use
\eqref{eq:ii} to conclude that $\overline{q}_1 - \overline{q}_2$ is in
the orthogonal of a set that is dense in $L^2(\Omega)$.  Therefore we
get $q_1 = q_2$ and uniqueness.

Calder\'on \cite{Calderon:1980:IBV} proved that for the harmonic case
($q=0$) there is a family of complex exponential harmonic functions
(hence the CGO name) such that for any $\xi \in \real^d$, one can pick
two members $v_1$, $v_2$ in the family so that $v_1 v_2 = e^{i x \cdot
\xi}$. Since $\xi$ is arbitrary, this is of course dense in
$L^2(\Omega)$ by the Fourier transform.  Crucially these members can be
chosen with arbitrarily high frequencies.  Since \eqref{eq:schroe} is
the Laplacian plus a lower order term, $v_1$ and $v_2$ plus appropriate
corrections solve \eqref{eq:schroe}, and these corrections vanish in the
high frequency limit. Hence high frequency asymptotic estimates show
that if products of solutions to the Dirichlet problem with $q=0$ are
dense in $L^2(\Omega)$, then products of solutions for any other $q$ are
dense in $L^2(\Omega)$.

\subsection{Parallels with the discrete setting}
In a finite graph we work with functions defined on finite sets, so we
do not have such high frequency asymptotics. The concept that replaces
this is that of analytic continuation for functions of several complex
variables (see e.g. \cite{Gunning:1965:AFS}). Also in finite dimensions,
saying that a family of vectors is dense in a space, simply means that
the family spans the space.

\section{The discrete Schr\"odinger and conductivity problems}
\label{sec:dsc}

We start in \S\ref{sec:prelim} with some preliminaries, then in
\S\ref{sec:lap} we define the weighted graph Laplacian. In
\S\ref{sec:dir} we give sufficient conditions on the conductivity
and the Schr\"odinger potential for the Dirichlet problem to have a
unique solution no matter what the prescribed value at the boundary
nodes is. In \S\ref{sec:ip} we formulate the inverse
Schr\"odinger and conductivity problems. Then in \S\ref{sec:green} we
give a discrete analogue to the Green identities.

\subsection{Preliminaries}
\label{sec:prelim}
We consider graphs $\cG = (V,E)$, where $V$ is a finite vertex set and $E
\subset \{ \{i,j\} ~ | ~ i,j \in V,~i\neq j\}$ is the edge set. The vertex
set $V$ is partitioned into boundary nodes $B$ and interior nodes $I$.
We use the set theory notation for functions, e.g. $f \in \complex^V$ is a
function $f: V \to \complex$. Upon fixing an ordering of the vertices, $f$
can be identified to a vector in $\complex^{|V|}$, that in a slight
abuse of notation is also denoted by $f$. In the same way, we identify a
linear operator $A: \complex^X \to \complex^Y$, where $X$ and $Y$
are finite sets (e.g. $X=E$ and $Y=V$), to a $|X| \times |Y|$
complex matrix which is also denoted by $A$. 

\subsection{The weighted graph Laplacian}
\label{sec:lap}
The {\em discrete gradient}
is the linear map $\DG : \complex^V \to \complex^E$ such that
\begin{equation}
 (\DG u)(\{i,j\}) = u(i) - u(j),~\mbox{for $u \in \complex^V$ and $\{i,j\}
 \in E$.}
\end{equation}
This definition depends on the ordering of vertices for an edge, but as
long as it is fixed once and for all it is inessential to the following
discussion.

Given a {\em discrete conductivity}
$\gamma \in \complex^E$, the {\em weighted graph
Laplacian} $L_\gamma : \complex^V \to \complex^V$ by (see e.g.
\cite{Chung:1997:SGT})
\begin{equation}
 L_\gamma u = \DG^* [ \gamma \odot (\DG u) ] = \sum_{\{i,j\} \in E}
 \gamma(\{i,j\}) (u(i) - u(j)), ~\mbox{for $u \in \complex^V$.}
 \label{eq:lap}
\end{equation}
Here $\DG^* : \complex^E \to \complex^V$ is the adjoint of $\DG$. Since
the entries of $\DG$ are real, we have $\DG^* = \DG^T$. By $u
\odot v$ we mean the Hadamard product of two vectors, e.g. if $u,v \in
\complex^E$, $(u\odot v)(e) = u(e)v(e)$ for all $e \in E$. In matrix
notation we have $L_\gamma = \DG^* \diag(\gamma) \DG$.

\subsection{The Dirichlet problem}
\label{sec:dir}
For given $\gamma \in \complex^E$ and $q \in \complex^I$, the {\em
Dirichlet problem for $\gamma,q$} consists of finding the interior values $u_I \in
\complex^I$ from the boundary values $u_B \in \complex^B$ such that
\begin{equation}
 (L_\gamma)_{IB} u_B + (L_\gamma)_{II} u_I + q \odot u_I = 0.
 \label{eq:int}
\end{equation}
Here $(L_\gamma)_{IB}$ means we restrict the rows of the Laplacian
$L_\gamma$ to the index set $I$ and the columns to the index set $B$.  A
solution $u \in \complex^V$ to the Dirichlet problem for $\gamma,q$ is
said to be $\gamma,q$ {\em harmonic}. We say the Dirichlet problem for
$\gamma,q$ is {\em well-posed} when the matrix $(L_\gamma)_{II} +
\diag(q)$ is invertible. This means that the interior values $u_I \in
\complex^I$ are uniquely determined by the boundary values $u_B \in
\complex^B$. The following theorem gives conditions guaranteeing the
$\gamma,q$ Dirichlet problem is well-posed. To write these conditions
concisely, we denote by $\complex_\zeta \equiv \{ z \in \complex ~ | ~
\Re z > \zeta \}$ the open region to the right of the line $\Re z =
\zeta$ in the complex plane, where $\zeta \in \real$. In what follows it
is convenient to consider the subgraph $\cG_I = (I,E_I)$ of $\cG$
restricted to the interior nodes and with edge set 
\begin{equation}
 E_I \equiv \{ \{i,j\} \in E ~|~ i,j\in I\}.
 \label{eq:ei}
\end{equation}

\begin{theorem}
\label{thm:dir}
Assuming $\cG$ and $\cG_I$ are connected graphs, for all $\gamma \in
\complex_0^E$ there is a real number $\zeta<0$ such that  the Dirichlet
problem for $\gamma,q$ is well-posed for $q \in \complex_\zeta^I$.
\end{theorem}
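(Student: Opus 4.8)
The plan is to prove something slightly stronger: that one can choose $\zeta < 0$ (depending on $\gamma$ and on $\cG$) so that for every $q \in \complex_\zeta^I$ the Hermitian part of the matrix $M \equiv (L_\gamma)_{II} + \diag(q)$ is positive definite. Since a square matrix whose Hermitian part is positive definite is injective, this yields well-posedness. Here and below $B$ and $I$ are assumed nonempty, as is implicit in the setup (otherwise there is nothing to prove).

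First I would record the standard splitting of $(L_\gamma)_{II}$ into an interior-edge part and a boundary-coupling part. Let $\DG_I : \complex^I \to \complex^{E_I}$ be the discrete gradient of the subgraph $\cG_I$, and let $d \in \complex^I$ be the vector of boundary conductances, $d(i) = \sum_{\{i,b\} \in E,\ b \in B} \gamma(\{i,b\})$. Then $(L_\gamma)_{II} = \DG_I^* \diag(\gamma|_{E_I}) \DG_I + \diag(d)$, so for $x \in \complex^I$ and $y \equiv \DG_I x$ one has
\begin{equation}
 \Re x^* (L_\gamma)_{II} x
 = \sum_{e \in E_I} (\Re\gamma(e))\,|y(e)|^2
 + \sum_{i \in I} (\Re d(i))\,|x(i)|^2 .
 \label{eq:dirquad}
\end{equation}

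Next I would show that the right-hand side of \eqref{eq:dirquad} is a positive definite quadratic form in $x$. If it vanishes at some $x$, then $\Re\gamma(e) > 0$ on every edge forces $y(e) = (\DG_I x)(e) = 0$ for all $e \in E_I$, so $x$ is constant on $\cG_I$; since $\cG_I$ is connected, $x(i) = c$ for all $i \in I$ and a single constant $c$. Because $\cG$ is connected with $I, B \neq \emptyset$, following a path from an interior node to a boundary node exhibits an edge joining $I$ to $B$, whence $\sum_{i \in I} \Re d(i) = \sum_{\{i,b\} \in E,\ i \in I,\ b \in B} \Re\gamma(\{i,b\}) > 0$; this forces $c = 0$, so $x = 0$. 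Thus the Hermitian matrix $\tfrac12((L_\gamma)_{II} + (L_\gamma)_{II}^*)$ is positive definite and has a smallest eigenvalue $\mu > 0$, i.e. $\Re x^* (L_\gamma)_{II} x \geq \mu \|x\|^2$ for every $x \in \complex^I$.

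Finally, take $\zeta \equiv -\mu < 0$. For any $q \in \complex_\zeta^I$ and any nonzero $x \in \complex^I$, using $\Re q(i) > \zeta$ together with $|x(i)|^2 > 0$ for at least one $i$,
\begin{equation}
 \Re x^* M x
 = \Re x^* (L_\gamma)_{II} x + \sum_{i \in I} (\Re q(i))\,|x(i)|^2
 > \mu \|x\|^2 + \zeta \|x\|^2 = 0 .
\end{equation}
So $M x = 0$ is impossible unless $x = 0$; hence $M$ is invertible and the Dirichlet problem for $\gamma, q$ is well-posed. I expect the positive-definiteness step to be the crux: the interior-edge form in \eqref{eq:dirquad} always annihilates constant vectors, and one must verify that the diagonal $\diag(d)$ removes precisely this one null direction. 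That is where connectedness of both $\cG_I$ and $\cG$ (and the presence of boundary nodes) genuinely enters, and it should also be checked in the degenerate cases $E_I = \emptyset$ or $|I| = 1$, where $\DG_I$ is the zero operator but the argument is unchanged.
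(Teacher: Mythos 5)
Your proof is correct and follows essentially the same route as the paper's: the same splitting $(L_\gamma)_{II} = L_{\gamma_I} + \diag(\mu)$ (your $d$ is the paper's $\mu$), the same use of connectedness of $\cG_I$ and $\cG$ to show the Hermitian real part is positive definite, and the same choice $\zeta = -\lambda_{\min}$; your observation that a matrix with positive definite Hermitian part is injective is just a restatement of the paper's field-of-values argument.
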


We call {\em restricted Laplacian}, the Laplacian $L_{\gamma_I}$ on
$\cG_I$ with weights $\gamma_I \equiv \gamma_{E_I}$. The restricted
Laplacian and the $II$ block of the full Laplacian differ only by a
diagonal term $\mu \in \complex^I$, i.e.
\begin{equation}
 (L_\gamma)_{II} = L_{\gamma_I} + \diag(\mu),
 \label{eq:rlap}
\end{equation}
where the entries of $\mu$ are 
\begin{equation} 
 \mu(j) = \sum_{i \in B, ~\{i,j\} \in E} \gamma(\{i,j\}), ~\text{for $j\in I$}.
 \label{eq:mu}
\end{equation}
If no entry of $\gamma$ is zero, the support of $\mu$ (i.e. the nodes
$i$ for which $\mu(i)$ is non-zero) consists of the interior nodes that
are connected by an edge to the boundary nodes, i.e. the nodes in the
set
\begin{equation}
 J = \{ i \in I ~|~ \{i,j\} \in E ~\text{for some $j\in B$} \}.
 \label{eq:j}
\end{equation}

\begin{proof}[Proof of theorem~\ref{thm:dir}]
Since $\gamma$ and $q$ are complex it is convenient to introduce the
notation $\gamma = \gamma' + i \gamma''$, where $\gamma' \equiv \Re \gamma$
and $\gamma'' \equiv \Im \gamma$ and similarly for $q$ and $\mu$ (as
defined in \eqref{eq:mu}).  We show that under the hypothesis of
the theorem, the matrix $(L_{\gamma})_{II} + \diag(q)$ is invertible.
We do this by showing that the field of values (see e.g.
\cite{Horn:2013:MA}) of $(L_{\gamma})_{II} + \diag(q)$, i.e. the
region $W$ of the complex plane defined by
\begin{equation}
 W \equiv \Mcb{ u^* [(L_{\gamma})_{II} + \diag(q)] u ~|~ u \in \complex^I
 ~\text{with}~\|u\|^2=1},
\end{equation}
is contained in $\complex_0$. Since the spectrum is contained in the field of values of a
matrix, this would mean that $0$ cannot be an eigenvalue of
$(L_{\gamma})_{II} + \diag(q)$, which is then invertible.
The proof is divided in three steps.

 {\bf Step 1. $\Re (L_{\gamma})_{II}$ is Hermitian positive definite.}
 Indeed we have
 \[
  \Re (L_{\gamma})_{II} = L_{\gamma'_I} + \diag(\mu').
 \]
 Since $\gamma \in \complex_0^E$, we clearly have $\gamma'_I>0$,
 $\mu'_J > 0$ and $\mu'_{I-J} = 0$.  Since the graph $\cG_I$ is
 connected, the restricted Laplacian $L_{\gamma'_I}$ must be Hermitian
 positive semidefinite with a one-dimensional nullspace spanned by the
 constant vector $[1,\ldots,1]^T \in \complex^I$ (see e.g.
 \cite{Chung:1997:SGT} or \cite{Colin:1998:SG}). Hence we have $u^*
 [L_{\gamma'_I} + \diag(\mu')] u \geq 0$ for all $u \in \complex^I$.
 Moreover if $u^* [L_{\gamma'_I} + \diag(\mu')] u = 0$, then $u =
 [a,\ldots,a]^T \in \complex^I$ for some $a \in \complex$. But then we
 must also have 
 \[ 
  0 = |a|^2 [1,\ldots,1] \diag(\mu') [1,\ldots,1]^T = |a|^2 \sum_{j\in J}
  \mu'(j).
 \]
 Since $\mu'_J > 0$, the sum above is positive. Thus $|a|^2=0$ and $u=0$, which
 proves the desired result. 
 
 {\bf Step 2. $\Re [ (L_{\gamma})_{II} + \diag(q) ]$ is positive
 definite}, for all $q \in \complex^I$ with  $q' > \zeta_\gamma$,
 where 
 \begin{equation}
  \zeta_\gamma \equiv -\lambda_{\min}((L_{\gamma'})_{II}),
  \label{eq:zetagamma}
 \end{equation}
 and $\lambda_{\min}(A)$ denotes the smallest eigenvalue (in
 magnitude) of a matrix $A$. By Step 1, $(L_{\gamma'})_{II}$ is
 Hermitian positive definite so $\zeta_\gamma<0$. To show the desired
 result, notice that
 \[ 
  \Re [ (L_{\gamma})_{II} + \diag(q) ] = (L_{\gamma'})_{II} +
  \diag(q'),
 \]
 is a Hermitian matrix, and for $u \in \complex^I$ with $\|u\|^2 =
 1$ its (real) Rayleigh quotient satisfies
 \[
  \begin{aligned}
  u^* [ (L_{\gamma'})_{II} + \diag(q') ] u & \geq
  \lambda_{\min}((L_{\gamma'})_{II}) \|u\|^2  + \sum_{i\in I} q'(i) |u(i)|^2 \\
  & > \lambda_{\min}((L_{\gamma'})_{II}) \|u\|^2 +  \zeta_\gamma \|u\|^2 = 0.
  \end{aligned}
 \]

 {\bf Step 3. The field of values $W$ is contained in $\complex_0$}. Let
 $u \in \complex^I$ with $\|u\|^2 = 1$. Then we can write
 \[
    u^* [(L_{\gamma})_{II} + \diag(q)] u = u^* \Re [(L_{\gamma})_{II} +
    \diag(q)] u + i u^* \Im [(L_{\gamma})_{II} + \diag(q)] u.
 \]
 Since $\Im [(L_{\gamma})_{II} + \diag(q)] = (L_{\gamma''})_{II} +
 \diag(q'')$
 is Hermitian, we clearly have that $ u^* \Im [(L_{\gamma})_{II} + \diag(q)] u \in \real$ and
 \[
  \Re(u^* [(L_{\gamma})_{II} + \diag(q)] u) = u^* \Re [(L_{\gamma})_{II} +
    \diag(q)] u > 0,
 \]
 by the result of Step 2. Thus any point in $W$ must have
 positive real part.
\end{proof}

\subsection{The inverse Schr\"odinger and conductivity problems}
\label{sec:ip}
Provided the  Dirichlet problem for $\gamma,q$
is well-posed, we can define the {\em Dirichlet to Neumann map}
$\Lambda_{\gamma,q} : \complex^B \to \complex^B$ by the linear operator
\begin{equation}
 \Lambda_{\gamma,q} u_B = (L_\gamma)_{BB} u_B + (L_\gamma)_{BI} u_I,
 \label{eq:dtn1}
\end{equation}
where $u_B \in \complex^B$ and $u_I \in \complex^I$ is obtained from $u_B$ by solving
\eqref{eq:int}. The Dirichlet to Neumann map can be written as the Schur
complement of the block $(L_\gamma)_{II} + \diag(q)$ in the matrix
\begin{equation}
\begin{bmatrix}
 (L_\gamma)_{BB} & (L_\gamma)_{BI}\\
 (L_\gamma)_{IB} & (L_\gamma)_{II} + \diag(q)
\end{bmatrix},
\end{equation}
in other words:
\begin{equation}
 \Lambda_{\gamma,q} = (L_\gamma)_{BB} - (L_\gamma)_{BI} 
 [(L_\gamma)_{II} + \diag(q)]^{-1} (L_\gamma)_{IB}.
 \label{eq:dtn}
\end{equation}

The {\em inverse conductivity problem} is to find $\gamma \in
\complex^E$ from the Dirichlet to Neumann map $\Lambda_{\gamma,0}$,
assuming the graph $\cG=(V,E)$ and the boundary nodes $B$ are known.

The {\em inverse Schr\"odinger problem} is to find $q \in \complex^I$
from the Dirichlet to Neumann map $\Lambda_{\gamma,q}$, assuming
the graph $\cG=(V,E)$, the boundary nodes $B$, and the
conductivity $\gamma$ are known.

\subsection{A discrete Green identity}
\label{sec:green}
Here is a discrete analogue of one of the Green identities that relates
a sum over boundary nodes to a sum over all the nodes.
\begin{lemma}[Green identity]\label{lem:gi} 
Let $\gamma$ and $q$ be such that the $\gamma,q$ Dirichlet problem is
well-posed.  Let $u,v \in \complex^V$, with $v$ being $\gamma,q$
harmonic. Then we have the following:
\begin{equation}
 u_B^T \Lambda_{\gamma,q} v_B = u^T L_\gamma v + u_I^T \diag(q) v_I.
\end{equation}
\end{lemma}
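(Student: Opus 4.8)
The plan is to unwind both sides using the block decomposition of $L_\gamma$ with respect to the vertex partition $V = B \cup I$, and then invoke the interior equation \eqref{eq:int} satisfied by $v$. First I would split $u$ and $v$ into their boundary and interior parts and expand
\[
 u^T L_\gamma v = u_B^T (L_\gamma)_{BB} v_B + u_B^T (L_\gamma)_{BI} v_I + u_I^T (L_\gamma)_{IB} v_B + u_I^T (L_\gamma)_{II} v_I ;
\]
note that only the fact that the rows and columns of $L_\gamma$ split along $B$ and $I$ is used here, not any symmetry of $L_\gamma$. Since $\diag(q)$ acts only on the interior components, adding $u_I^T \diag(q) v_I$ and collecting the two terms carrying the factor $u_I^T$ gives
\[
 u^T L_\gamma v + u_I^T \diag(q) v_I = u_B^T\big[(L_\gamma)_{BB} v_B + (L_\gamma)_{BI} v_I\big] + u_I^T\big[(L_\gamma)_{IB} v_B + (L_\gamma)_{II} v_I + q \odot v_I\big].
\]

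Next I would use that $v$ is $\gamma,q$ harmonic: by definition this says exactly that the bracket multiplying $u_I^T$ is zero, being \eqref{eq:int} with $u$ replaced by $v$. This kills the interior contribution and leaves $u^T L_\gamma v + u_I^T \diag(q) v_I = u_B^T[(L_\gamma)_{BB} v_B + (L_\gamma)_{BI} v_I]$. Finally, because the Dirichlet problem for $\gamma,q$ is well-posed, $v_I$ is the \emph{unique} solution of \eqref{eq:int} with boundary data $v_B$, so by the definition \eqref{eq:dtn1} of the Dirichlet to Neumann map we have $\Lambda_{\gamma,q} v_B = (L_\gamma)_{BB} v_B + (L_\gamma)_{BI} v_I$; substituting this identity into the right-hand side yields $u_B^T \Lambda_{\gamma,q} v_B$, which is the claimed equality.

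I do not expect any real obstacle: this is a purely linear-algebraic manipulation. The only points needing a little care are the bookkeeping of block indices and the observation that $u_I^T \diag(q) v_I$ combines cleanly with the interior block of $L_\gamma v$ to reconstruct precisely the left-hand side of the harmonicity equation \eqref{eq:int}. Well-posedness enters only to justify identifying $v_I$ with the interior field generated by the Dirichlet solve built into the definition of $\Lambda_{\gamma,q}$.
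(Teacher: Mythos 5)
Your proof is correct and follows essentially the same route as the paper: expand the right-hand side along the $B\cup I$ block partition, use the harmonicity equation \eqref{eq:int} for $v$ to annihilate the interior contribution, and identify the surviving boundary term with $\Lambda_{\gamma,q} v_B$ via \eqref{eq:dtn1}. The paper merely writes the same computation more compactly using $(L_\gamma v)_B$, $(L_\gamma v)_I$ and an extension $\widetilde{q}$ of $q$ by zero to the boundary nodes.
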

\begin{proof}
The right hand side can be written as
\begin{equation}
u^T L_\gamma v + u_I^T \diag(q) v_I = u_B^T(L_\gamma v)_B +
u_I^T(L_\gamma v + \widetilde{q} \odot v)_I  = u_B^T(L_\gamma v)_B,
\end{equation}
where $\widetilde{q}_B \equiv 0$ and $\widetilde{q}_I \equiv q$. The
last equality comes from $v$ being a solution to the Dirichlet problem
\eqref{eq:int} for $\gamma,q$. The desired result follows from the
definition of the Dirichlet to Neumann map \eqref{eq:dtn1} by noticing
that 
\[
 (L_\gamma v)_B = (L_\gamma)_{BB} v_B + (L_\gamma)_{BI} v_I =
 \Lambda_{\gamma,q} v_B,
\]
when $v$ is $\gamma,q$ harmonic.
\end{proof}

\section{Solvability for the inverse conductivity problem}
\label{sec:cond}
We now show a uniqueness result for the inverse conductivity problem. We
start by showing an identity similar to the one used in
\cite{Calderon:1980:IBV,Sylvester:1987:GUT} that relates a difference in
boundary data to a difference in conductivities
(\S\ref{sec:sint}). This identity relates the uniqueness question
to studying the space spanned by products of (discrete) gradients of
solutions (\S\ref{sec:sprod}). The result has applications to Newton's
method (\S\ref{sec:newton}) and can be interpreted probabilistically
(\S\ref{sec:proba}). The main uniqueness result is proved in
\S\ref{sec:sbur}.

\subsection{An interior identity}
\label{sec:sint}

The following lemma relates the difference in the boundary data for two
conductivities to the conductivity difference. This identity is a
discrete analogue to the one appearing in \cite{Sylvester:1987:GUT}.

\begin{lemma}[Interior identity for conductivities]
\label{lem:sint}
Let $\gamma_1,\gamma_2 \in \complex_0^E$, $u$ be $\gamma_1,0$
harmonic and $v$ be $\gamma_2,0$ harmonic. Then the following identity
holds
\begin{equation}
\label{eq:sint}
 u_B ^T ( \Lambda_{\gamma_1,0} - \Lambda_{\gamma_2,0} ) v_B = (\gamma_1
 - \gamma_2)^T \Mb{ (\DG u) \odot (\DG v)}.
\end{equation}
\end{lemma}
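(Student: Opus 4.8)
The plan is to prove the identity by applying the discrete Green identity (Lemma~\ref{lem:gi}) twice, once for each conductivity, and then subtracting. First I would note that since $\gamma_1,\gamma_2 \in \complex_0^E$ and $q=0$, Theorem~\ref{thm:dir} guarantees both Dirichlet problems are well-posed (taking $q=0 \in \complex_\zeta^I$ for the appropriate $\zeta<0$), so the Dirichlet to Neumann maps $\Lambda_{\gamma_1,0}$ and $\Lambda_{\gamma_2,0}$ are defined and Lemma~\ref{lem:gi} applies.

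The key computation: apply Lemma~\ref{lem:gi} with conductivity $\gamma_1$, potential $q=0$, taking the harmonic function to be $u$ (which is $\gamma_1,0$ harmonic) and the other function to be $v$. This gives
\begin{equation*}
 v_B^T \Lambda_{\gamma_1,0} u_B = v^T L_{\gamma_1} u.
\end{equation*}
Symmetrically, apply Lemma~\ref{lem:gi} with conductivity $\gamma_2$, potential $q=0$, harmonic function $v$ (which is $\gamma_2,0$ harmonic) and other function $u$:
\begin{equation*}
 u_B^T \Lambda_{\gamma_2,0} v_B = u^T L_{\gamma_2} v.
\end{equation*}
Since $\Lambda_{\gamma_1,0}$ is symmetric as a matrix (being a Schur complement of a symmetric matrix, because $L_\gamma = \DG^* \diag(\gamma)\DG$ is symmetric when the diagonal entries are scalars, even complex ones — symmetric, not Hermitian), we have $v_B^T \Lambda_{\gamma_1,0} u_B = u_B^T \Lambda_{\gamma_1,0} v_B$. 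Subtracting the two displayed equations then yields
\begin{equation*}
 u_B^T(\Lambda_{\gamma_1,0} - \Lambda_{\gamma_2,0}) v_B = v^T L_{\gamma_1} u - u^T L_{\gamma_2} v.
\end{equation*}
Now I would rewrite the right-hand side using the factored form of the Laplacian: $u^T L_\gamma v = u^T \DG^* \diag(\gamma) \DG v = (\DG u)^T \diag(\gamma)(\DG v) = \gamma^T[(\DG u)\odot(\DG v)]$, and likewise $v^T L_{\gamma_1} u = \gamma_1^T[(\DG u)\odot(\DG v)]$ using symmetry of the Hadamard product. Hence the difference collapses to $(\gamma_1-\gamma_2)^T[(\DG u)\odot(\DG v)]$, which is the claimed identity.

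The main obstacle — really the only subtle point — is the bookkeeping of which function plays the role of the "harmonic" function in each invocation of Lemma~\ref{lem:gi}, and making sure the symmetry $u_B^T\Lambda_{\gamma,0}v_B = v_B^T\Lambda_{\gamma,0}u_B$ is justified; this rests on $\Lambda_{\gamma,0}$ being a (complex) symmetric matrix, which follows because $L_\gamma$ is symmetric (not Hermitian) and the Schur complement of a symmetric matrix is symmetric. Everything else is a routine rearrangement using $\DG^* = \DG^T$ and the definition of the Hadamard product. One should also double-check the $q=0$ case of Lemma~\ref{lem:gi} literally reads $u_B^T\Lambda_{\gamma,0}v_B = u^T L_\gamma v$, with the $\diag(q)$ term vanishing, which it does.
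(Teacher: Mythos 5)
Your proposal is correct and follows essentially the same route as the paper: apply the Green identity (Lemma~\ref{lem:gi}) once for each conductivity, subtract, and factor $L_{\gamma}=\DG^T\diag(\gamma)\DG$ to expose the Hadamard product. In fact you are slightly more careful than the paper, which writes both applications as $u_B^T\Lambda_{\gamma_k,0}v_B=u^TL_{\gamma_k}v$ without remarking that one of them requires the complex symmetry (not Hermitianness) of $L_\gamma$ and hence of $\Lambda_{\gamma,0}$ to swap the roles of $u$ and $v$ — the point you rightly flag as the only subtle step.
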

\begin{proof}
By using lemma~\ref{lem:gi} twice we get:
\[
  u_B^T \Lambda_{\gamma_1,0} v_B = u^T L_{\gamma_1,0} v, ~\text{and}~
  u_B^T \Lambda_{\gamma_2,0} v_B = u^T L_{\gamma_2,0} v.
\]
Subtracting the second equation from the first we get
\[
  u_B ^T ( \Lambda_{\gamma_1,0} - \Lambda_{\gamma_2,0} ) v_B = 
   u^T (L_{\gamma_1,0}  - L_{\gamma_2,0}) v
   = u^T \DG^T \diag(\gamma_1 - \gamma_2) \DG v,
\]
which gives the desired result.
\end{proof}

\subsection{Uniqueness almost everywhere}
\label{sec:sprod}
Inspired by the complex geometric optics method
\cite{Calderon:1980:IBV,Sylvester:1987:GUT}, we would like to study the
subspaces of $\complex^E$
\begin{equation}
 \Ps(\gamma_1,\gamma_2) \equiv \linspan \Mcb{(\DG u)\odot(\DG v) ~|~
 \text{$u$ is $\gamma_1,0$ harmonic and $v$ is $\gamma_2,0$ harmonic}},
\end{equation}
for conductivities $\gamma_1,\gamma_2 \in \complex_0^E$. To see why this
subspace is important, assume there are two conductivities
$\gamma_1,\gamma_2$ with the same boundary data, i.e.
$\Lambda_{\gamma_1,0} = \Lambda_{\gamma_2,0}$, then lemma~\ref{lem:sint}
implies that $\overline{\gamma}_1 - \overline{\gamma}_2 \in
\Ps(\gamma_1,\gamma_2)^\perp$. Hence if we are so fortunate to have
$\Ps(\gamma_1,\gamma_2) = \complex^E$ we would conclude that $\gamma_1 =
\gamma_2$. 

\begin{theorem}[Uniqueness almost everywhere for conductivities]
\label{thm:bus}
If there is $\rho_1,\rho_2 \in \complex_0^E$ such that
$\Ps(\rho_1,\rho_2) = \complex^E$, then $\Ps(\gamma_1,\gamma_2) =
\complex^E$ for almost all $(\gamma_1,\gamma_2) \in \complex_0^E \times
\complex_0^E$.
\end{theorem}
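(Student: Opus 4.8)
The plan is to exploit the fact that the condition $\Ps(\gamma_1,\gamma_2) = \complex^E$ is an open, dense, full-measure condition because it is governed by the non-vanishing of a determinant that depends analytically (indeed polynomially, after clearing denominators) on the entries of $\gamma_1$ and $\gamma_2$. First I would parametrize the $\gamma_1,0$ harmonic functions: for each $\gamma \in \complex_0^E$ the map sending boundary data $u_B \in \complex^B$ to the full harmonic extension $u \in \complex^V$ is linear, given by $u_I = -[(L_\gamma)_{II}]^{-1} (L_\gamma)_{IB} u_B$, and its entries are rational functions of $\gamma$ with denominator $\det (L_\gamma)_{II}$, which is nonzero on all of $\complex_0^E$ by Theorem~\ref{thm:dir} (with $q=0$). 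Composing with $\DG$, the vector $\DG u \in \complex^E$ depends linearly on $u_B$ and rationally (no poles on $\complex_0^E$) on $\gamma$. Hence, choosing bases $u_B^{(1)},\dots,u_B^{(|B|)}$ and $v_B^{(1)},\dots,v_B^{(|B|)}$ of $\complex^B$, the $|B|^2$ vectors $(\DG u^{(k)})\odot(\DG v^{(\ell)}) \in \complex^E$ span $\Ps(\gamma_1,\gamma_2)$, and their entries are rational functions of $(\gamma_1,\gamma_2)$ holomorphic on the connected open set $\complex_0^E\times\complex_0^E$.

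The next step is to encode ``these vectors span $\complex^E$'' as the non-vanishing of a single scalar analytic function. Assemble the $|B|^2$ spanning vectors as the columns (or rows) of an $|E|\times|B|^2$ matrix $M(\gamma_1,\gamma_2)$; then $\Ps(\gamma_1,\gamma_2)=\complex^E$ iff $\mathrm{rank}\,M(\gamma_1,\gamma_2) = |E|$, i.e. iff at least one of the $|E|\times|E|$ minors of $M$ is nonzero. Let $\Phi(\gamma_1,\gamma_2)$ be the sum of squared absolute values of all these minors — or, more usefully for analyticity, work with the finite collection of minors $\{\Phi_s\}$ themselves, each a holomorphic function of $(\gamma_1,\gamma_2)$ on $\complex_0^E\times\complex_0^E$ (after multiplying through by a suitable power of $\det(L_{\gamma_1})_{II}\det(L_{\gamma_2})_{II}$ to clear denominators, which is harmless since that product never vanishes there). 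The hypothesis supplies a point $(\rho_1,\rho_2)$ at which $\mathrm{rank}\,M=|E|$, hence at which some minor $\Phi_{s_0}$ is nonzero.

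Now I invoke the standard fact that a holomorphic function on a connected open subset of $\complex^N$ that is not identically zero has a zero set of Lebesgue measure zero (this is the several-complex-variables analogue alluded to in \S\ref{sec:cgo}; it follows, e.g., by Fubini and the corresponding one-variable statement, or from the fact that the zero set is a proper analytic subvariety). Since $\complex_0^E\times\complex_0^E$ is connected and open in $\complex^{2|E|}$, and $\Phi_{s_0}$ is holomorphic there and nonzero at $(\rho_1,\rho_2)$, the set $\{\Phi_{s_0}=0\}$ has measure zero. Off that set, $M$ has rank $|E|$, so $\Ps(\gamma_1,\gamma_2)=\complex^E$; this proves the claim, since the exceptional set is contained in $\{\Phi_{s_0}=0\}$.

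I expect the main obstacle to be purely bookkeeping: verifying carefully that the entries of $\DG u$, as functions of $\gamma$, extend holomorphically (with no poles) to all of $\complex_0^E$ — which is exactly where Theorem~\ref{thm:dir} is needed to guarantee $\det(L_\gamma)_{II}\neq 0$ — and that the minors $\Phi_s$, after clearing denominators, are genuinely holomorphic rather than merely meromorphic on the domain in question. One should also be slightly careful that the spanning set of size $|B|^2$ is used consistently (it suffices to span, one need not worry about it being a basis), and that ``almost all'' is with respect to Lebesgue measure on $\complex_0^E\times\complex_0^E\cong\real^{4|E|}$, identifying $\complex$ with $\real^2$. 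None of these points is deep, but they are the места where a proof could go wrong if stated loosely; the conceptual content is entirely in the analytic-continuation principle.
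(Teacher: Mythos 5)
Your proposal is correct and follows essentially the same route as the paper: reduce to the $|E|\times|B|^2$ matrix of Hadamard products of gradients of the basic harmonic extensions, observe that its maximal minors are holomorphic (rational with nonvanishing denominator $\det(L_{\gamma_k})_{II}$, guaranteed by Theorem~\ref{thm:dir}) on the connected open set $\complex_0^E\times\complex_0^E$, and conclude via the fact that the zero set of a not-identically-zero holomorphic function of several complex variables has Lebesgue measure zero. The only cosmetic difference is that you clear denominators to get polynomials, whereas the paper works directly with the rational minors $\fs_\alpha$ as analytic functions.
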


The condition $\Ps(\gamma_1,\gamma_2) = \complex^E$ implies that
$\Lambda_{\gamma_1,0} \neq \Lambda_{\gamma_2,0}$ whenever $\gamma_1 \neq
\gamma_2$, i.e. wherever this condition holds we can distinguish two
conductivities from their DtN maps. The conclusion of
Theorem~\ref{thm:bus} says that conductivities are distinguishable
except for a zero measure set $Z$ of $(\gamma_1,\gamma_2)$ for which
$\Ps(\gamma_1,\gamma_2) \neq \complex^E$. What happens in $Z$ is
not so clear cut. The set $Z$ contains all $(\gamma_1,\gamma_2)$ for
which $\gamma_1 \neq \gamma_2$ but $\Lambda_{\gamma_1,0} =
\Lambda_{\gamma_2,0}$. However $Z$ may also contain other
$(\gamma_1,\gamma_2)$ for which $\Lambda_{\gamma_1,0} \neq
\Lambda_{\gamma_2,0}$.  Hence, theorem~\ref{thm:bus} guarantees that the
set $C \subset Z$ of $(\gamma_1,\gamma_2)$ for which
$\Lambda_{\gamma_1,0} = \Lambda_{\gamma_2,0}$ but $\gamma_1 \neq
\gamma_2$ is of measure zero in $\complex_0^E \times \complex_0^E$. 
We can refine the conclusion of theorem~\ref{thm:bus} by considering 
equivalence classes of conductivities for
the equivalence relation $\Lambda_{\gamma_1,0}=\Lambda_{\gamma_2,0}$.
\begin{corollary}\label{cor:sloc0}
 If there is $\rho_1,\rho_2 \in \complex_0^E$ such that
 $\Ps(\rho_1,\rho_2) = \complex^E$, then any equivalence class of
 conductivities in $\complex_0^E$ for the equivalence relation
 $\Lambda_{\gamma_1,0}=\Lambda_{\gamma_2,0}$ must be of measure zero in
 $\complex_0^E$.
\end{corollary}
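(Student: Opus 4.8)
The plan is to obtain the corollary as a short consequence of Theorem~\ref{thm:bus}. Recall the set $Z = \{(\gamma_1,\gamma_2) \in \complex_0^E\times\complex_0^E ~|~ \Ps(\gamma_1,\gamma_2) \neq \complex^E\}$ from the discussion following that theorem; Theorem~\ref{thm:bus} says precisely that $Z$ has Lebesgue measure zero in $\complex_0^E\times\complex_0^E$. As was noted just before Theorem~\ref{thm:bus}, whenever $\gamma_1 \neq \gamma_2$ but $\Lambda_{\gamma_1,0} = \Lambda_{\gamma_2,0}$, the interior identity (Lemma~\ref{lem:sint}) forces the nonzero vector $\overline{\gamma}_1 - \overline{\gamma}_2$ into $\Ps(\gamma_1,\gamma_2)^\perp$, so $\Ps(\gamma_1,\gamma_2)\neq\complex^E$; that is, such a pair lies in $Z$. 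Hence any two \emph{distinct} conductivities belonging to the same equivalence class form a pair in $Z$.

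I would then argue by contradiction. Suppose some equivalence class $\mathcal{C} \subseteq \complex_0^E$ has positive measure. First note that $\mathcal{C}$ is measurable: it is the preimage of a single matrix under the map $\gamma \mapsto \Lambda_{\gamma,0}$, which by \eqref{eq:dtn} is continuous (indeed rational, using that $(L_\gamma)_{II}$ is invertible on $\complex_0^E$ by Theorem~\ref{thm:dir} with $q=0$), so $\mathcal{C}$ is closed in $\complex_0^E$ and "positive measure" is meaningful. By Tonelli's theorem the product $\mathcal{C}\times\mathcal{C}$ then has positive measure in $\complex_0^E\times\complex_0^E \cong \real^{4|E|}$. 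The diagonal $\Delta \equiv \{(\gamma,\gamma) ~|~ \gamma\in\complex_0^E\}$ lies in a $2|E|$-dimensional real affine subspace, hence is a null set, so $(\mathcal{C}\times\mathcal{C})\setminus\Delta$ still has positive measure. But by the previous paragraph $(\mathcal{C}\times\mathcal{C})\setminus\Delta \subseteq Z$, contradicting that $Z$ is null. Therefore every equivalence class is of measure zero.

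The substantive work is entirely contained in Theorem~\ref{thm:bus}; deriving the corollary is purely a measure-theoretic bookkeeping step, so I do not expect a genuine obstacle here. The only points that deserve care are the measurability of equivalence classes — dispatched by exhibiting them as closed preimages — and the two elementary facts invoked, namely that a product of two positive-measure sets has positive measure and that the diagonal is null in the product; both are standard.
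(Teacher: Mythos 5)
Your proof is correct and follows essentially the same route as the paper's: both reduce the claim to Theorem~\ref{thm:bus} by observing that the off-diagonal part of $M\times M$ for an equivalence class $M$ lies in the null set $Z$, and then use Fubini--Tonelli together with the nullity of the diagonal to conclude $M$ is null. Your added remark on measurability of the equivalence class (as a closed preimage under the continuous map $\gamma\mapsto\Lambda_{\gamma,0}$) is a small refinement the paper leaves implicit, but the argument is otherwise the same.
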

\begin{proof}
 Since the hypothesis of theorem~\ref{thm:bus} holds, the set $Z$ of
 $(\gamma_1,\gamma_2)$ for which $\Ps(\gamma_1,\gamma_2) \neq
 \complex^E$ is of measure zero. Let $M$ be an equivalence class of
 conductivities that are indistinguishable from their DtN maps.  With
 $\diag(M\times  M) = \{ (\gamma,\gamma) | \gamma \in M \}$, we clearly
 have $M\times M - \diag(M \times M) \subset Z$. Hence $M\times M -
 \diag(M\times M)$ is of measure zero and so is $M\times M$.  Therefore
 $M$ is of measure zero.
\end{proof}

For another point of view, we consider the
linearization of the boundary data $\Lambda_{\gamma,0}$ with respect to
changes in the conductivity $\gamma$.
\begin{lemma}[Linearization for conductivities]
\label{lem:lins}
Let $\gamma \in \complex_0^E$. Then for sufficiently small $\delta\gamma
\in \complex^E$,
\begin{equation}
 u_B^T \Lambda_{\gamma+\delta\gamma,0} v_B = u_B^T \Lambda_{\gamma,0}
 v_B + (\delta\gamma)^T[ (\DG u) \odot (\DG v) ] + o(\delta\gamma),
\end{equation}
where $u=(u_B,u_I)$ and $v=(v_B,v_I)$ are $\gamma,0$ harmonic.
\end{lemma}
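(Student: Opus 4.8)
\emph{Proof proposal.} The plan is to read the linearization straight off the interior identity of Lemma~\ref{lem:sint}, with no need to differentiate the Schur complement \eqref{eq:dtn} by hand. Fix $\gamma \in \complex_0^E$. Since $\complex_0^E$ is open and, by Theorem~\ref{thm:dir} applied with $q=0$, the matrix $(L_\gamma)_{II}$ is invertible, for $\delta\gamma$ small enough we still have $\gamma+\delta\gamma \in \complex_0^E$ and $(L_{\gamma+\delta\gamma})_{II}$ invertible; hence both $\Lambda_{\gamma+\delta\gamma,0}$ and the $\gamma+\delta\gamma,0$ harmonic function $w=(w_B,w_I)$ with prescribed boundary values $w_B \equiv v_B$ are well defined.

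First I would apply Lemma~\ref{lem:sint} with $\gamma_1 = \gamma$ and $\gamma_2 = \gamma+\delta\gamma$, to the $\gamma,0$ harmonic function $u$ and the $\gamma+\delta\gamma,0$ harmonic function $w$. Since $w_B = v_B$, this gives
\[
 u_B^T(\Lambda_{\gamma,0} - \Lambda_{\gamma+\delta\gamma,0})v_B = -(\delta\gamma)^T\Mb{(\DG u)\odot(\DG w)},
\]
equivalently $u_B^T\Lambda_{\gamma+\delta\gamma,0}v_B = u_B^T\Lambda_{\gamma,0}v_B + (\delta\gamma)^T[(\DG u)\odot(\DG w)]$. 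It then remains to replace $w$ by $v$ at the cost of an $o(\delta\gamma)$ error.

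For that, I would show $w = v + O(\|\delta\gamma\|)$. The functions $w$ and $v$ share the boundary values $v_B$, and their interior parts are $w_I = -[(L_{\gamma+\delta\gamma})_{II}]^{-1}(L_{\gamma+\delta\gamma})_{IB}v_B$ and $v_I = -[(L_\gamma)_{II}]^{-1}(L_\gamma)_{IB}v_B$; since $\gamma \mapsto L_\gamma$ is linear and matrix inversion is smooth on the open set of invertible matrices, the Neumann-series bound $(A+E)^{-1} = A^{-1} - A^{-1}EA^{-1} + O(\|E\|^2)$ yields $\|w_I - v_I\| = O(\|\delta\gamma\|)$, hence $\DG w = \DG v + O(\|\delta\gamma\|)$ and $(\DG u)\odot(\DG w) = (\DG u)\odot(\DG v) + O(\|\delta\gamma\|)$. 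Substituting into the displayed identity gives
\[
 (\delta\gamma)^T[(\DG u)\odot(\DG w)] = (\delta\gamma)^T[(\DG u)\odot(\DG v)] + O(\|\delta\gamma\|^2),
\]
and $O(\|\delta\gamma\|^2) = o(\delta\gamma)$, which is the asserted expansion.

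The only substantive step is the perturbation estimate $w = v + O(\|\delta\gamma\|)$, and it is routine: it combines the openness of the set of invertible matrices (so $(L_{\gamma+\delta\gamma})_{II}$ stays invertible) with the standard first-order bound for the inverse. One could instead differentiate \eqref{eq:dtn} term by term using $d(A^{-1}) = -A^{-1}(dA)A^{-1}$ and, using that $(L_\gamma)_{II}$ is symmetric with $(L_\gamma)_{BI}^T=(L_\gamma)_{IB}$, collect the four resulting contributions into $u^T L_{\delta\gamma} v = (\DG u)^T\diag(\delta\gamma)(\DG v) = (\delta\gamma)^T[(\DG u)\odot(\DG v)]$; I prefer the route through Lemma~\ref{lem:sint}, since it reuses machinery already in place and makes the origin of the factor $(\DG u)\odot(\DG v)$ transparent.
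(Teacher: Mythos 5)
Your proposal is correct and follows the same route as the paper, which also derives the linearization directly from the interior identity of Lemma~\ref{lem:sint} (the paper takes $\gamma_1=\gamma+\epsilon\delta\gamma$, $\gamma_2=\gamma$, divides by $\epsilon$ and lets $\epsilon\to 0$). The only difference is one of detail: you spell out the perturbation bound $w = v + O(\|\delta\gamma\|)$ for the harmonic extension, which the paper's one-line proof leaves implicit.
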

\begin{proof}
Use the interior identity \eqref{eq:sint} with $\gamma_1 = \gamma +
\epsilon\delta\gamma$ and $\gamma_2 = \gamma$, divide by $\epsilon$ and take the
limit as $\epsilon \to 0$.
\end{proof}

The mapping $\gamma \to \Lambda_{\gamma,0}$ is thus Fr\'echet
differentiable and it's Jacobian $D_\gamma \Lambda_{\gamma,0} :
\complex^E \to \complex^{B\times B}$ about $\gamma$ is 
\begin{equation}
 D_\gamma \Lambda_{\gamma,0} \delta \gamma = L_{BI} L_{II}^{-1} \DG^T
 \diag(\delta\gamma) \DG L_{II}^{-1} L_{IB},
\end{equation}
for $\delta\gamma \in \complex^E$ and where we omitted the subscript
$\gamma$ in the weighted graph Laplacian $L_\gamma$. We say the
linearized problem is {\em solvable} at $\gamma$, when the Jacobian about
$\gamma$ is injective, i.e. the nullspace $\nullspace(D_\gamma
\Lambda_{\gamma,0}) = \{0\}$. Solvability of the linearized inverse
problem about $\gamma$ is of course equivalent to the range $\colspace(D_\gamma
\Lambda_{\gamma,0}^*) = \complex^E = \Ps(\gamma,\gamma)$, where the last
equality comes from lemma~\ref{lem:lins} and the definition of the
Jacobian. Hence by taking $\rho_1=\rho_2$
in theorem~\ref{thm:bus} we immediately get the following corollary.
\begin{corollary}
\label{cor:sloc2}
If the linearized problem is solvable about some $\rho \in \complex_0^E$
then $\Lambda_{\gamma_1,0} = \Lambda_{\gamma_2,0}$ implies $\gamma_1
=\gamma_2$, for almost all $(\gamma_1,\gamma_2) \in \complex_0^E \times
\complex_0^E$.
\end{corollary}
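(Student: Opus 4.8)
The plan is to deduce Corollary~\ref{cor:sloc2} directly from Theorem~\ref{thm:bus}, so that no new machinery is required beyond the observation --- already recorded just above the corollary --- that solvability of the linearized problem at $\rho$ is equivalent to $\colspace(D_\gamma\Lambda_{\gamma,0}^*)=\complex^E$ at $\gamma=\rho$, and that by Lemma~\ref{lem:lins} (together with the explicit form of the Jacobian) this range is exactly $\Ps(\rho,\rho)$. Thus the hypothesis ``the linearized problem is solvable about some $\rho\in\complex_0^E$'' is precisely the hypothesis of Theorem~\ref{thm:bus} with $\rho_1=\rho_2=\rho$.

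With that reduction done, Theorem~\ref{thm:bus} furnishes a set $Z\subset\complex_0^E\times\complex_0^E$ of Lebesgue measure zero outside of which $\Ps(\gamma_1,\gamma_2)=\complex^E$. Fix a pair $(\gamma_1,\gamma_2)\notin Z$ with $\Lambda_{\gamma_1,0}=\Lambda_{\gamma_2,0}$. Then the interior identity of Lemma~\ref{lem:sint} gives $(\gamma_1-\gamma_2)^T\Mb{(\DG u)\odot(\DG v)}=0$ for every $\gamma_1,0$-harmonic $u$ and every $\gamma_2,0$-harmonic $v$, hence $(\gamma_1-\gamma_2)^T w=0$ for all $w$ in the span $\Ps(\gamma_1,\gamma_2)=\complex^E$; testing against the standard basis of $\complex^E$ forces $\gamma_1=\gamma_2$. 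This is exactly the conclusion of the corollary on the full-measure set $\complex_0^E\times\complex_0^E\setminus Z$, so the proof is complete once Theorem~\ref{thm:bus} is in place.

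Consequently the only real obstacle is Theorem~\ref{thm:bus} itself, whose proof the paper defers. Were I to prove it, the heart would be an \emph{identity-theorem} (analytic continuation) argument in several complex variables. I would parametrize the $\gamma_j,0$-harmonic functions by prescribing the canonical boundary data and solving \eqref{eq:int}; by Theorem~\ref{thm:dir} the block $(L_{\gamma_j})_{II}$ is invertible throughout $\complex_0^E$, and by Cramer's rule its inverse --- hence the interior values of the basis solutions, hence the vectors $(\DG u)\odot(\DG v)$ --- depends holomorphically on $(\gamma_1,\gamma_2)$ on the connected open set $\complex_0^E\times\complex_0^E$. The condition $\Ps(\gamma_1,\gamma_2)=\complex^E$ amounts to the non-vanishing of at least one $|E|\times|E|$ minor of the matrix whose columns are these products; each minor is holomorphic in $(\gamma_1,\gamma_2)$, the hypothesis provides one point $(\rho_1,\rho_2)$ at which some minor is nonzero, and by the identity theorem that minor is not identically zero, so its zero set --- which contains $Z$ --- has measure zero. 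The delicate points to verify are that $\complex_0^E\times\complex_0^E$ is connected (so one nonvanishing point propagates everywhere) and that this parametrization of harmonic functions is genuinely analytic without spurious singularities, which is exactly where the well-posedness supplied by Theorem~\ref{thm:dir} is used.
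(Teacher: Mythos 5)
Your proof is correct and follows the paper's route exactly: the identification of linearized solvability at $\rho$ with $\Ps(\rho,\rho)=\complex^E$, the application of Theorem~\ref{thm:bus} with $\rho_1=\rho_2=\rho$, and the use of Lemma~\ref{lem:sint} to conclude $\gamma_1=\gamma_2$ off the measure-zero set are precisely how the paper obtains this corollary. Your sketch of Theorem~\ref{thm:bus} itself (minors of the product-of-gradients matrix, holomorphy on the connected open set $\complex_0^E\times\complex_0^E$ via Cramer's rule and Theorem~\ref{thm:dir}, and the measure-zero zero set of a nonvanishing analytic function) also matches the argument given in \S\ref{sec:sbur}.
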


The same proof technique for Theorem~\ref{thm:bur} can be used
to prove the following.
\begin{corollary}
\label{cor:sloc1}
If the linearized problem is solvable about some $\rho \in
\complex_0^E$ then it is solvable for almost all $\gamma \in
\complex_0^E$.
\end{corollary}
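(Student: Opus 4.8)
The plan is to run, for the one-parameter family $\gamma \mapsto \Ps(\gamma,\gamma)$, the same analytic-continuation argument that underlies Theorem~\ref{thm:bus}. (Theorem~\ref{thm:bus} itself does not suffice, since its conclusion only controls pairs $(\gamma_1,\gamma_2)$ off a measure-zero set, while the diagonal $\{(\gamma,\gamma)\}$ is itself of measure zero in $\complex_0^E\times\complex_0^E$.) Recall from the discussion preceding Corollary~\ref{cor:sloc2} that the linearized problem is solvable at $\gamma$ exactly when $\Ps(\gamma,\gamma)=\complex^E$; so it is enough to show that
\[
 S \equiv \Mcb{\gamma \in \complex_0^E ~|~ \Ps(\gamma,\gamma) \neq \complex^E}
\]
has measure zero. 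First I would produce a finite spanning set of $\Ps(\gamma,\gamma)$ that depends holomorphically on $\gamma$. Fix the standard basis $\{e_i\}_{i\in B}$ of $\complex^B$, let $u^{(i)}=u^{(i)}(\gamma)$ be the $\gamma,0$ harmonic function with $u^{(i)}_B=e_i$, so that $u^{(i)}_I = -[(L_\gamma)_{II}]^{-1}(L_\gamma)_{IB}e_i$ by \eqref{eq:int}, and put $w_{ij}(\gamma) \equiv (\DG u^{(i)})\odot(\DG u^{(j)}) \in \complex^E$. Since $u_B \mapsto u$ is linear, every $\gamma,0$ harmonic function is a combination of the $u^{(i)}$, hence $\Ps(\gamma,\gamma)=\linspan\{w_{ij}(\gamma) ~|~ i,j\in B\}$. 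By Theorem~\ref{thm:dir} applied with $q=0$ (for which $0 \in \complex_\zeta$), the block $(L_\gamma)_{II}$ is invertible for \emph{every} $\gamma \in \complex_0^E$; its entries being affine in $\gamma$, the entries of $[(L_\gamma)_{II}]^{-1}$ are rational in $\gamma$ with denominator $\det(L_\gamma)_{II}$ nonvanishing on $\complex_0^E$, so each $w_{ij}$ is a holomorphic $\complex^E$-valued function on $\complex_0^E$.

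Next I would translate membership in $S$ into the simultaneous vanishing of finitely many holomorphic functions. Let $M(\gamma)$ be the $|E|\times|B|^2$ matrix whose columns are the $w_{ij}(\gamma)$. Then $\Ps(\gamma,\gamma)=\complex^E$ iff $\operatorname{rank}M(\gamma)=|E|$, i.e.\ iff at least one of the finitely many $|E|\times|E|$ minors $m_\alpha(\gamma)$ of $M(\gamma)$ is nonzero; thus $S=\bigcap_\alpha\{\gamma ~|~ m_\alpha(\gamma)=0\}$. Each $m_\alpha$ is holomorphic on $\complex_0^E$, which is convex (a product of half-planes) and in particular a connected open subset of $\complex^{|E|}$. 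By hypothesis $\operatorname{rank}M(\rho)=|E|$, so some minor $m_{\alpha_0}$ has $m_{\alpha_0}(\rho)\neq 0$ and is therefore not identically zero on $\complex_0^E$. Since $S \subseteq \{\gamma ~|~ m_{\alpha_0}(\gamma)=0\}$ and the zero set of a not-identically-zero holomorphic function on a connected open subset of $\complex^{|E|}$ is a proper analytic subvariety, hence of $2|E|$-dimensional Lebesgue measure zero (the several-complex-variables fact alluded to in \S\ref{sec:cgo}; see \cite{Gunning:1965:AFS}), the set $S$ has measure zero, which is the claim.

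The holomorphy bookkeeping and the passage to minors are routine. The two points to get right are (i) the measure-zero property of zero sets of holomorphic functions of several variables, and (ii) invoking Theorem~\ref{thm:dir} at the correct place, which is precisely what makes the $w_{ij}$ and the minors $m_\alpha$ genuinely holomorphic on the whole polydomain $\complex_0^E$ rather than on some smaller open set; combined with the connectedness of $\complex_0^E$ this is what lets the identity theorem apply globally, and I expect (ii) to be the subtlest step. I do not anticipate any genuine obstacle beyond these.
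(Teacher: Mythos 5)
Your proposal is correct and follows essentially the same route the paper intends: the paper dispatches Corollary~\ref{cor:sloc1} with the remark that ``the same proof technique for Theorem~\ref{thm:bus} can be used,'' i.e.\ restrict the determinants $\fs_\alpha$ of Lemmas~\ref{lem:subs}--\ref{lem:sfanalytic} to the diagonal $\gamma_1=\gamma_2=\gamma$ and invoke the measure-zero property of the zero set of a nonvanishing holomorphic function on the connected open set $\complex_0^E$. Your observation that Theorem~\ref{thm:bus} cannot simply be cited (the diagonal itself has measure zero in the product space) is exactly the right reason the single-variable rerun is needed, and your holomorphy and rank-via-minors steps match the paper's Lemmas~\ref{lem:sfanalytic} and \ref{lem:subs}.
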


A consequence of Corollary~\ref{cor:sloc1} is that the solvability of
the linearized inverse problem does not depend on the actual
conductivity (except for a set of measure zero). This is without any
topological assumptions on the graph. This fact was already known for
circular planar graphs \cite{Curtis:1994:FCC,Curtis:1998:CPG}.

Corollary~\ref{cor:sloc1} suggests a simple test for checking whether
the conductivity problem is solvable in a graph. All we need to do is
check whether the Jacobian about some conductivity, say $\gamma = 1$
(the constant conductivity equal to one), has a trivial nullspace. Of
course, this may give a false negative if $\Ps(\gamma,\gamma) =
\complex^E$ for almost all $\gamma \in \complex_0^E$ but $\Ps(1,1) \neq
\complex^E$. However we have verified numerically that this is unlikely
to happen.

The following corollary shows that if we start at a conductivity for
which the linearized problem is solvable and go in any direction we may
encounter only finitely many conductivities that have the same DtN map
or where the problem is not solvable. The proof of this result requires
notation introduced for the proof of theorem~\ref{thm:bus} and is thus
presented in \S\ref{sec:sbur}.

\begin{corollary}\label{cor:newton}
If the linearized problem is solvable at $\gamma \in \complex^E_0$ then
along any direction $\delta\gamma\in \complex^E$ there are at most finitely
many $t\in \complex$ with $\Re(\gamma + t \delta\gamma) > 0$ for which
\begin{itemize}
 \item[i.] the linearized problem
 is not solvable at $\gamma + t \delta\gamma$.
 \item[ii.]  $\gamma$ and $\gamma+t\delta\gamma$ may have the same DtN
 map (i.e. $\Ps(\gamma,\gamma + t \delta\gamma) \neq \complex^E$).
\end{itemize}
\end{corollary}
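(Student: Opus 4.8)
The plan is to reduce the condition $\Ps(\gamma_1,\gamma_2)=\complex^E$ to the non-vanishing of a single holomorphic function of the conductivity pair, restrict that function to the complex line $t\mapsto\gamma+t\,\delta\gamma$, and then exploit the fact that a nonzero rational function of one complex variable has only finitely many zeros. First I would set up the matrix already used in the proof of Theorem~\ref{thm:bus}: fix a basis $e_1,\dots,e_{|B|}$ of $\complex^B$, let $u^{(i)}$ be the $\gamma_1,0$ harmonic function with boundary values $e_i$ and $v^{(j)}$ the $\gamma_2,0$ harmonic function with boundary values $e_j$, and put $w_{ij}=(\DG u^{(i)})\odot(\DG v^{(j)})\in\complex^E$. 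Since harmonic extension is linear in the boundary data and $(u,v)\mapsto(\DG u)\odot(\DG v)$ is bilinear, $\Ps(\gamma_1,\gamma_2)$ is exactly the span of the $|B|^2$ vectors $w_{ij}$; collecting them as the columns of a matrix $M(\gamma_1,\gamma_2)$ of size $|E|\times|B|^2$, the identity $\Ps(\gamma_1,\gamma_2)=\complex^E$ holds iff $M(\gamma_1,\gamma_2)$ has some nonzero $|E|\times|E|$ minor.

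Next I would record that each such minor is a rational function of $(\gamma_1,\gamma_2)$ whose denominator never vanishes on $\complex_0^E\times\complex_0^E$. Indeed $u^{(i)}_I=-[(L_{\gamma_1})_{II}]^{-1}(L_{\gamma_1})_{IB}e_i$, so by Cramer's rule every entry of $\DG u^{(i)}$ is a polynomial in $\gamma_1$ over $\det(L_{\gamma_1})_{II}$, and likewise for $\DG v^{(j)}$; hence every entry of $M$, and therefore every $|E|\times|E|$ minor, is a polynomial in $(\gamma_1,\gamma_2)$ over a power of $\det(L_{\gamma_1})_{II}\det(L_{\gamma_2})_{II}$. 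By Theorem~\ref{thm:dir} with $q=0$ this denominator is nonzero throughout $\complex_0^E\times\complex_0^E$, so each minor is a well-defined holomorphic function there.

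Now fix $\gamma\in\complex_0^E$ with the linearized problem solvable, i.e. $\Ps(\gamma,\gamma)=\complex^E$, and fix one $|E|\times|E|$ minor $m$ of $M$ with $m(\gamma,\gamma)\neq0$. Given $\delta\gamma$, set $U=\{t\in\complex:\Re(\gamma+t\,\delta\gamma)>0\}$; this is a finite intersection of open half-planes of $\complex$, and $0\in U$, so $U$ is a nonempty connected open set, and on $U$ the polynomial $\det(L_{\gamma+t\delta\gamma})_{II}=\det\!\big((L_\gamma)_{II}+t(L_{\delta\gamma})_{II}\big)$ has no zero. Substituting the affine parametrization into $m$ in two ways gives $\phi(t):=m(\gamma+t\delta\gamma,\,\gamma+t\delta\gamma)$ and $\psi(t):=m(\gamma,\,\gamma+t\delta\gamma)$; by the previous paragraph each is a quotient of a polynomial in $t$ by a polynomial with no zero in $U$, hence a rational function holomorphic on $U$, and $\phi(0)=\psi(0)=m(\gamma,\gamma)\neq0$, so neither numerator is the zero polynomial. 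Therefore $\phi$ and $\psi$ each have only finitely many zeros in $U$. Finally, if $t\in U$ and $\phi(t)\neq0$ then $M(\gamma+t\delta\gamma,\gamma+t\delta\gamma)$ has a nonzero $|E|\times|E|$ minor, so $\Ps(\gamma+t\delta\gamma,\gamma+t\delta\gamma)=\complex^E$ and the linearized problem is solvable there; and if $t\in U$ and $\psi(t)\neq0$ then $\Ps(\gamma,\gamma+t\delta\gamma)=\complex^E$. Hence the set of $t\in U$ satisfying (i) is contained in the finite set $\{\phi=0\}$, the set satisfying (ii) is contained in the finite set $\{\psi=0\}$, and so the set of $t\in U$ satisfying (i) or (ii) is finite, which is the assertion.

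The only point needing care, rather than a genuine obstacle, is staying in the holomorphic (not merely real-analytic) category after restricting to the line: this is why one works with a fixed minor of $M$, built without complex conjugation, instead of with a Gram-type determinant such as $\det(MM^*)$; and one must check that the relevant denominator, a power of $\det(L_{\gamma+t\delta\gamma})_{II}$, stays nonzero on $U$, which is precisely what Theorem~\ref{thm:dir} gives. Everything else is the bookkeeping already done in the proof of Theorem~\ref{thm:bus}.
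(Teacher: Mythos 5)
Your argument is exactly the paper's: both reduce solvability of $\Ps(\gamma_1,\gamma_2)=\complex^E$ to the non-vanishing of a fixed $|E|\times|E|$ minor $\fs_\beta$ of the products-of-gradients matrix, restrict to the line $t\mapsto\gamma+t\,\delta\gamma$ to obtain the rational functions $\fs_\beta(\gamma+t\delta\gamma,\gamma+t\delta\gamma)$ and $\fs_\beta(\gamma,\gamma+t\delta\gamma)$, and conclude via finiteness of zeros of a nonzero rational function with no poles on the connected open set $K=\{t:\Re(\gamma+t\delta\gamma)>0\}$. The proposal is correct and matches the paper's proof in all essentials, including the justification that the denominator $\det(L_{\gamma+t\delta\gamma})_{II}$ does not vanish on $K$.
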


\begin{remark}\label{rem:sreal}
As appears later in \S\ref{sec:sbur}, the proof of
theorem~\ref{thm:bus} hinges on complex analyticity. If we use real
analyticity instead, we can get results analogous to
theorem~\ref{thm:bus} and corollaries~\ref{cor:sloc0}, \ref{cor:sloc2},
\ref{cor:sloc1} and \ref{cor:newton}, but where $\complex$ is replaced by $\real$. The set
$\real_0^E$ is then the (open) positive orthant of $\real^E$.
\end{remark}

\subsection{Application to Newton's method}
\label{sec:newton}
Corollary~\ref{cor:newton} is useful to show that the systems in
Newton's method applied to finding the conductors in a graph are very
likely to admit a unique solution (if only a finite number of iterations
are carried out). Newton's method applied to finding $\gamma$ from
$\Lambda_{\gamma,0}$ takes the form (see e.g.  \cite{Nocedal:2006:NO})
\begin{tabbing}
xxx \= xxx \= xxx\kill\\ {\bf Newton's method}\\
$\gamma^{(0)} =$ given\\
for $k=0,1,2,\ldots$\\
\> Find step $\delta\gamma^{(k)}$ s.t. $D_\gamma\Lambda_{\gamma^{(k)},0} \delta
\gamma^{(k)} = \Lambda_{\gamma^{(k)},0} - \Lambda_{\gamma,0}$\\
\> Choose step length $t_k>0$\\
\> Update $\gamma^{(k+1)} = \gamma^{(k)} + t_k \delta\gamma^{(k)}$\\
\end{tabbing}
where the
$\gamma^{(k)}$ are the iterates that hopefully converge to $\gamma$ and
$t_k>0$ is a parameter used to adjust the step length and ensure
feasibility ($\Re \gamma^{(k)} > 0$). If the linearized problem about
the $k-$th iterate $\gamma^{(k)}$ is solvable, then the linear system we
need to solve to find the $k-$th step admits a unique solution
$\delta\gamma^{(k)}$. Moreover the linearized
problem is solvable almost everywhere (by corollary~\ref{cor:sloc2}),
so we can expect that the linearized problem is solvable at the next
iterate $\gamma^{(k)}$. In fact corollary~\ref{cor:newton} guarantees
that up to finitely many exceptions, all choices of the next iterate
$\gamma^{(k+1)}$ are such that the linearized problem is solvable.

\subsection{A probabilistic interpretation}
\label{sec:proba}
From a probabilistic point of view, the conclusion of
Theorem~\ref{thm:bus} means intuitively that if we choose two conductivities at
random, there is zero probability that $\Ps(\gamma_1,\gamma_2) \neq
\complex^E$. To see this consider an absolutely continuous (with
respect to the Lebesgue measure) random vector $(\Gamma_1,\Gamma_2)
\in \complex^E_0 \times \complex_0^E$ with distribution $\mu$ and
induced probability measure $\proba$. If the hypothesis of
theorem~\ref{thm:bus} hold and
$M \subset \complex^E_0 \times \complex_0^E$ is a measurable set with
$\proba\Mb{(\Gamma_1,\Gamma_2) \in M} > 0$ then
 \begin{equation}
  \proba\Mb{\Ps(\Gamma_1,\Gamma_2)= \complex^E ~|~ (\Gamma_1,\Gamma_2) \in M} = 1.
 \end{equation}
Indeed the conclusion of theorem~\ref{thm:bus} means that the set
\begin{equation}
Z = \Mcb{(\Gamma_1,\Gamma_2) \in M ~|~\Ps(\Gamma_1,\Gamma_2) \neq \complex^E }
\end{equation}
is a set of measure zero and therefore by absolute continuity of $\mu$
we also have $\mu(Z)=0$. Moreover the set $Z$ contains all the pairs
$(\gamma_1,\gamma_2)$ for which $\Lambda_{\gamma_1} =
\Lambda_{\gamma_2}$ but $\gamma_1 \neq \gamma_2$, i.e conductivities
that are indistinguishable from boundary measurements. Hence we also
have that the expectation
\begin{equation}
\expect\Mb{ \| \Lambda_{\Gamma_1,0} - \Lambda_{\Gamma_2,0} \| ~|~
(\Gamma_1,\Gamma_2) \in M} = \int_M \| \Lambda_{\Gamma_1,0} -
\Lambda_{\Gamma_2,0} \| ~d\mu > 0.
\end{equation}

The conclusion of Corollary~\ref{cor:sloc1} means that if $\Gamma$ is an
absolutely continuous random vector on $\complex_0^E$, then on any
measurable set $M \subset \complex_0^E$ with $\proba\Mb{\Gamma \in M}
>0$, we must have that 
\begin{equation}
 \proba\Mb{D_\gamma\Lambda_{\Gamma,0} ~\text{is injective} ~|~\Gamma \in
 M} = 1.
\end{equation}
We conclude this section by noting that uniqueness a.e. arises
naturally in (finite dimensional) linear systems.
\begin{example}
 Another example of uniqueness a.e. is that of a
 matrix that has a non-trivial nullspace. Let $A \in \complex^{m\times
 n}$, with $A \neq 0$ and $\nullspace(A) \neq \{0\}$. Then
 $\nullspace(A)$ is a subspace of $\complex^n$ of dimension at most
 $n-1$, and as such is a set of Lebesgue measure zero in $\complex^n$.
 Hence for almost all $(x_1,x_2) \in \complex^n \times \complex^n$, $A
 x_1 = A x_2$ implies $x_1=x_2$.
\end{example}
\subsection{Proof of uniqueness a.e. for conductivities}
\label{sec:sbur}
First we note that the product of solutions subspace is spanned by
finitely many Dirichlet problem solutions.
\begin{lemma}\label{lem:fins}
Let $\gamma_1,\gamma_2 \in \complex^E_0$. Let $u^{(i)}$ (resp.
$v^{(i)}$) be $\gamma_1,0$ (resp. $\gamma_2,0$) harmonic with boundary
data $u^{(i)}_B = v^{(i)}_B = e_i$, where $\{e_i\}_{i \in B}$ is the
canonical basis of $\complex^B$. Then the product of solutions subspace
is such that
\begin{equation}
 \Ps(\gamma_1,\gamma_2) = \linspan \Mcb{(\DG u^{(i)}) \odot
 (\DG v^{(j)})}_{i,j \in B}.
\label{eq:fins}
\end{equation}
\end{lemma}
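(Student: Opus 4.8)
The plan is to show that every generator $(\DG u)\odot(\DG v)$, where $u$ is $\gamma_1,0$ harmonic and $v$ is $\gamma_2,0$ harmonic, already lies in the span of the finitely many products $(\DG u^{(i)})\odot(\DG v^{(j)})$. Since the reverse inclusion is immediate (each $u^{(i)}$ is $\gamma_1,0$ harmonic and each $v^{(j)}$ is $\gamma_2,0$ harmonic, so each $(\DG u^{(i)})\odot(\DG v^{(j)})$ is one of the defining generators of $\Ps(\gamma_1,\gamma_2)$), this gives the claimed equality.

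The first step is to observe that the solution map of the Dirichlet problem is linear in the boundary data. Concretely, because the $\gamma_1,0$ Dirichlet problem is well-posed (as $\gamma_1\in\complex_0^E$, by Theorem~\ref{thm:dir} with $q=0$), the interior values $u_I$ depend linearly on $u_B$ via $u_I = -[(L_{\gamma_1})_{II}]^{-1}(L_{\gamma_1})_{IB}u_B$, hence $u$ itself depends linearly on $u_B$. Writing $u_B = \sum_{i\in B} a_i e_i$ with $a_i\in\complex$, linearity and uniqueness force $u = \sum_{i\in B} a_i u^{(i)}$, and therefore $\DG u = \sum_{i\in B} a_i \DG u^{(i)}$ by linearity of $\DG$. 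Likewise, if $v$ is $\gamma_2,0$ harmonic with $v_B = \sum_{j\in B} b_j e_j$, then $\DG v = \sum_{j\in B} b_j \DG v^{(j)}$.

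The second step is simply to expand the Hadamard product using bilinearity: since $(x\odot y)(e) = x(e)y(e)$ is bilinear in $(x,y)$, we get
\begin{equation}
 (\DG u)\odot(\DG v) = \M{\sum_{i\in B} a_i \DG u^{(i)}} \odot \M{\sum_{j\in B} b_j \DG v^{(j)}}
 = \sum_{i,j\in B} a_i b_j \, (\DG u^{(i)})\odot(\DG v^{(j)}).
\end{equation}
This exhibits an arbitrary generator of $\Ps(\gamma_1,\gamma_2)$ as a (finite) linear combination of the $(\DG u^{(i)})\odot(\DG v^{(j)})$, so $\Ps(\gamma_1,\gamma_2)\subseteq\linspan\{(\DG u^{(i)})\odot(\DG v^{(j)})\}_{i,j\in B}$, and combined with the trivial reverse inclusion we are done.

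There is no real obstacle here; the only point requiring a word of care is the appeal to well-posedness to justify that the solution map $u_B \mapsto u$ is well-defined and linear — this is exactly where $\gamma_1,\gamma_2\in\complex_0^E$ enters, via Theorem~\ref{thm:dir}. Everything else is bilinearity of $\odot$ and linearity of $\DG$.
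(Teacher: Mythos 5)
Your proof is correct and follows essentially the same route as the paper: decompose $u$ and $v$ on the canonical boundary basis using well-posedness of the Dirichlet problems, then expand the Hadamard product bilinearly, with the reverse inclusion being immediate from the definition of $\Ps(\gamma_1,\gamma_2)$. If anything, your phrasing is slightly more careful than the paper's, which takes an arbitrary $w\in\Ps(\gamma_1,\gamma_2)$ to be a single product $(\DG u)\odot(\DG v)$ rather than a linear combination of such products; your reduction to generators avoids that small imprecision.
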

\begin{proof}
By definition of the product of solutions subspace, we must have 
\[ 
 \linspan \Mcb{ (\DG u^{(i)}) \odot (\DG v^{(j)})}_{i,j \in B} \subset
 \Ps(\gamma_1,\gamma_2).
\]
Let $w \in \Ps(\gamma_1,\gamma_2)$, then $w = (\DG u) \odot (\DG v)$,
where $u$ is $\gamma_1,0$ harmonic and $v$ is $\gamma_2,0$ harmonic.
Since the Dirichlet problems for $\gamma_1,0$ and $\gamma_2,0$ are
well-posed, we have 
\[
 u = \sum_{i\in B} u_B(i) u^{(i)} ~\text{and}~ 
 v = \sum_{j\in B} v_B(j) v^{(j)}.
\]
Hence $w$ can be written as a linear combination of the $(\DG u^{(i)}) \odot
(\DG v^{(j)})$ and we have the inclusion $\Ps(\gamma_1,\gamma_2) \subset
\linspan \Mcb{ (\DG u^{(i)}) \odot (\DG v^{(j)})}_{i,j \in B}$, which
gives the desired result.
\end{proof}

Another way to write the subspace $\Ps(\gamma_1,\gamma_2)$ is as the
range of a matrix, i.e.
\begin{equation}
 \Ps(\gamma_1,\gamma_2) = \colspace( \Ws ( \gamma_1,\gamma_2) ),
\end{equation}
where the matrix $\Ws(\gamma_1,\gamma_2) \in \complex^{|E| \times
|B|^2}$ is the matrix with columns being all possible Hadamard products
between the columns of the matrices $\nabla U(\gamma_1)$ and $\nabla U(\gamma_2)$,
where $U(\gamma)$ is the matrix with solutions corresponding to boundary
data $e_i$, $i\in B$, that is
\[
 U(\gamma) = 
 \begin{bmatrix}
 \mathbb{I}_B\\
 ((L_{\gamma})_{II})^{-1} L_{IB}
 \end{bmatrix}.
\]
where $\mathbb{I}_B$ is the $|B| \times |B|$ identity matrix. 
Concretely, the matrix $\Ws(\gamma_1,\gamma_2)$ is given by
\begin{equation}
 [\Ws(\gamma_1,\gamma_2)]_{:,i+(j-1)|B|} = \Mb{ \nabla U(\gamma_1) }_{:,i} \odot
 \Mb{ \nabla U(\gamma_2) }_{:,j}, ~\text{for}~i,j=1,\ldots,|B|,
 \label{eq:sprodmat}
\end{equation}
where the $i-$th column of a matrix $A$ is denoted by $A_{:,i}$. 

We are interested in finding a way of characterizing whether the
products of (gradients of) solutions span $\complex^E$ or not.  One way
to do this is to look at the determinants
\begin{equation}\label{eq:fs} 
\fs_\alpha(\gamma_1,\gamma_2) = \det [\Ws(\gamma_1,\gamma_2)]_{:,\alpha} 
\end{equation} 
where the matrix $[\Ws(\gamma_1,\gamma_2)]_{:,\alpha}$ is the $|E| \times |E|$
submatrix of $\Ws(\gamma_1,\gamma_2)$ obtained by selecting the $|E|$ columns
corresponding to the multi-index $\alpha \in \{1,\ldots,|B|^2\}^{|E|}$. Thus
$\Ps(\gamma_1,\gamma_2) = \complex^E$ if and only if there is an $\alpha \in
\{1,\ldots,|B|^2\}^{|E|}$ for which $\fs_{\alpha}(\gamma_1,\gamma_2) \neq 0$.
When $|B|^2<|E|$, all such determinants are zero (since we must have have
repeated columns). When $|B|^2 \geq |E|$, it is enough to check the
$\binom{|B|^2}{|E|}$ choices of columns $\alpha =
(\alpha_1,\ldots,\alpha_{|E|})$, with $1 \leq \alpha_1 < \ldots <
\alpha_{|E|} \leq |B|^2$. This observation is summarized in the
following lemma.

\begin{lemma}\label{lem:subs}
Let $\gamma_1,\gamma_2 \in \complex_0^E$. The following statements are equivalent
\begin{itemize}
 \item[i.] $\Ps(\gamma_1,\gamma_2) = \complex^E$
 \item[ii.] $\rank \Ws(\gamma_1,\gamma_2) = |E|$
 \item[iii.] $\fs_\alpha(\gamma_1,\gamma_2) \neq 0$ for some $\alpha \in
 \{1,\ldots,|B|^2\}^{|E|}$ with $1 \leq \alpha_1 < \ldots < \alpha_{|E|}
 \leq |B|^2$.
\end{itemize}
\end{lemma}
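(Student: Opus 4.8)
The plan is to observe that the three statements are linked by elementary linear algebra once one unwinds the definition of $\Ws(\gamma_1,\gamma_2)$. The equivalence (i)$\Leftrightarrow$(ii) I would get essentially for free: as already noted right before the statement, $\Ps(\gamma_1,\gamma_2)=\colspace(\Ws(\gamma_1,\gamma_2))$, because by \eqref{eq:sprodmat} together with lemma~\ref{lem:fins} the columns of $\Ws(\gamma_1,\gamma_2)$ are precisely the spanning set $\{(\DG u^{(i)})\odot(\DG v^{(j)})\}_{i,j\in B}$ of $\Ps(\gamma_1,\gamma_2)$. Since $\colspace(\Ws(\gamma_1,\gamma_2))$ is a subspace of $\complex^E$, it fills $\complex^E$ exactly when its dimension equals $|E|$, i.e. when $\rank\Ws(\gamma_1,\gamma_2)=|E|$.

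Next I would prove (ii)$\Leftrightarrow$(iii). The matrix $\Ws(\gamma_1,\gamma_2)$ has size $|E|\times|B|^2$, so $|E|$ is its maximal possible rank, and by the standard characterization of rank via minors this maximum is attained if and only if some $|E|\times|E|$ submatrix is nonsingular. Such a submatrix is specified by a choice of $|E|$ columns, i.e. by a multi-index $\alpha\in\{1,\ldots,|B|^2\}^{|E|}$, and its determinant is by definition $\fs_\alpha(\gamma_1,\gamma_2)$; hence (ii) is equivalent to $\fs_\alpha(\gamma_1,\gamma_2)\neq 0$ for some $\alpha$. To pass to the strictly increasing multi-indices in (iii), I would use that $\fs_\alpha$ is an alternating function of the entries of $\alpha$: permuting the entries permutes the columns of the submatrix and multiplies the determinant by the sign of the permutation, while a repeated entry produces a repeated column and forces $\fs_\alpha=0$. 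Thus the existence of any $\alpha$ with $\fs_\alpha\neq 0$ is equivalent to the existence of one with $1\leq\alpha_1<\cdots<\alpha_{|E|}\leq|B|^2$. I would also dispose of the degenerate case $|B|^2<|E|$ separately: then there is no admissible $\alpha$ and, consistently, $\rank\Ws(\gamma_1,\gamma_2)\leq|B|^2<|E|$ and $\Ps(\gamma_1,\gamma_2)\neq\complex^E$, so all three statements are (vacuously) false.

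I do not expect a genuine obstacle here: this is a bookkeeping lemma that repackages the condition $\Ps(\gamma_1,\gamma_2)=\complex^E$ into a finite determinantal test used later in this section. The only point deserving a line of care is checking that restricting to increasing multi-indices in (iii) loses no generality, which is exactly the alternating property of the determinant under column permutations together with the vanishing of a determinant with a repeated column.
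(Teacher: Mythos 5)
Your proposal is correct and follows essentially the same route as the paper, which states the lemma as a summary of the immediately preceding discussion: the identification $\Ps(\gamma_1,\gamma_2)=\colspace(\Ws(\gamma_1,\gamma_2))$ gives (i)$\Leftrightarrow$(ii), and the characterization of maximal rank by nonvanishing $|E|\times|E|$ minors, restricted to increasing column selections (with the degenerate case $|B|^2<|E|$ handled by repeated columns), gives (ii)$\Leftrightarrow$(iii). Your extra sentence on the alternating property of $\fs_\alpha$ under permutation of the multi-index is a slightly more explicit justification of the reduction to increasing $\alpha$ than the paper's appeal to ``determinant properties,'' but it is the same argument.
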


\begin{remark}
Finding the rank of $\Ws(\gamma_1,\gamma_2)$ by checking the
determinants of all possible square submatrices with maximal dimensions
is not very efficient and is notoriously inaccurate to calculate in
floating point arithmetic. A better way would be to test whether the
smallest singular value of $\Ws(\gamma_1,\gamma_2)$ is close to zero,
and this is what we have used in the numerics
(\S\ref{sec:numerics}). Nevertheless we use this determinantal
characterization of rank in the proof of theorem~\ref{thm:bus} because
it is an algebraic operation that preserves analyticity.
\end{remark}

The notion of analyticity that we use here is that of analytic (or
holomorphic) functions of several complex variables (see e.g.
\cite{Gunning:1965:AFS}). We recall that a function $f: \complex^n \to
\complex$ is analytic on some open set $U \subset \complex^n$ if for
each $z_0 \in U$, $f(z)$ can be expressed as a power series that
converges on $U$, i.e.
\[
 f(z) = \sum_{\alpha\in \nat^n} c_\alpha (z-z_0)^\alpha,
\]
where for a multi-index $\alpha = (\alpha_1,\ldots,\alpha_n) \in \nat^n$ we have for $z\in
\complex^n$ that $z^\alpha = z_1^{\alpha_1} \ldots z_n^{\alpha_n}$.
In particular, rational functions of the form $A(z)/B(z)$, for two
polynomials $A(z)$ and $B(z)$, are analytic on any connected open set where $B(z)
\neq 0$. Hence we have the following.

\begin{lemma}\label{lem:sfanalytic}
The functions $\fs_\alpha: \complex^E_0 \times \complex^E_0 \to \complex$ are
analytic for any $\alpha \in
\{0,\ldots,|B|^2\}^{|E|}$.
\end{lemma}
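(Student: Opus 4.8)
The plan is to build $\fs_\alpha$ up from the entries of $(L_\gamma)_{II}$ by repeatedly using that, on a fixed connected open set, the class of analytic functions is closed under the operations appearing in the construction of $\fs_\alpha$: taking a matrix inverse (via Cramer's rule), multiplying by the fixed real matrix $\nabla$, forming Hadamard products of entries, and taking a determinant.

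First I would note that, by \eqref{eq:lap}, the entries of $(L_\gamma)_{II}$ and $(L_\gamma)_{IB}$ are linear, hence polynomial, functions of $\gamma \in \complex^E$. By Theorem~\ref{thm:dir} applied with $q = 0$ (admissible because that theorem produces some $\zeta < 0$ and $\Re 0 = 0 > \zeta$), the matrix $(L_\gamma)_{II}$ is invertible for every $\gamma \in \complex_0^E$; equivalently $\det (L_\gamma)_{II}$ does not vanish on $\complex_0^E$. Since $\complex_0^E$ is a finite product of open half-planes, it is a connected open subset of $\complex^E$. By Cramer's rule, every entry of $((L_\gamma)_{II})^{-1}$, and hence every entry of $((L_\gamma)_{II})^{-1}(L_\gamma)_{IB}$, is a ratio $A(\gamma)/B(\gamma)$ of polynomials with $B(\gamma) = \det (L_\gamma)_{II} \neq 0$ on the connected open set $\complex_0^E$, and is therefore analytic there by the rational-function criterion recalled before the lemma. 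Every entry of $U(\gamma)$ is either a constant ($0$ or $1$) or such a rational function, so all entries of $U(\gamma)$, and therefore all entries of $\nabla U(\gamma)$ (fixed linear combinations of entries of $U(\gamma)$), are analytic on $\complex_0^E$.

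Passing to the two-variable setting, by \eqref{eq:sprodmat} each entry of $\Ws(\gamma_1,\gamma_2)$ is of the form $[\nabla U(\gamma_1)]_{e,i}\,[\nabla U(\gamma_2)]_{e,j}$, i.e. a product of a function analytic in $\gamma_1$ (and constant in $\gamma_2$) with a function analytic in $\gamma_2$ (and constant in $\gamma_1$); such a product is analytic on the connected open set $\complex_0^E \times \complex_0^E$. Finally, for any multi-index $\alpha$, the function $\fs_\alpha(\gamma_1,\gamma_2) = \det [\Ws(\gamma_1,\gamma_2)]_{:,\alpha}$ is a fixed polynomial (the signed sum over permutations) in the entries of the $|E| \times |E|$ submatrix $[\Ws(\gamma_1,\gamma_2)]_{:,\alpha}$, each of which is analytic on $\complex_0^E \times \complex_0^E$; since polynomial combinations of analytic functions are analytic, $\fs_\alpha$ is analytic on $\complex_0^E \times \complex_0^E$.

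I do not expect a genuine obstacle here. The two points that need a little care are: invoking Theorem~\ref{thm:dir} with $q = 0$ so that $\det (L_\gamma)_{II}$ is certified nonzero on all of $\complex_0^E$ rather than only near a point; and checking that $\complex_0^E$, hence $\complex_0^E \times \complex_0^E$, is connected and open, which is exactly what licenses the rational-function criterion for analyticity stated just before the lemma.
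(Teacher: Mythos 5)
Your proof is correct and follows essentially the same route as the paper's: cofactor/Cramer's rule makes the entries of $((L_\gamma)_{II})^{-1}(L_\gamma)_{IB}$ rational with nonvanishing denominator on $\complex_0^E$ (via theorem~\ref{thm:dir} with $q=0$), and analyticity is preserved under the Hadamard products forming $\Ws$ and the final determinant. Your added care about the connectedness and openness of $\complex_0^E$ and the explicit admissibility of $q=0$ is a welcome tightening but not a different argument.
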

\begin{proof}
Let $\gamma_1,\gamma_2 \in \complex_0^E$.  First note that the entries of the
matrices $((L_{\gamma_k})_{II})^{-1}$, $k=1,2$, are complex analytic on
$\gamma_1$ and $\gamma_2$ when $\gamma_1,\gamma_2\in \complex^E_0$. This
is because the cofactor formula for the inverse guarantees that the
entries of the matrices in question are rational functions in
$\gamma_1,\gamma_2$ which are analytic provided $\det
((L_{\gamma_k})_{II}) \neq 0$, $k=1,2$.  As in the proof of
theorem~\ref{thm:dir}, the condition $\gamma_1,\gamma_2 \in
\complex^E_0$ ensures that $\det (L_{\gamma_k})_{II} \neq 0$, $k=1,2$.
Since $\Ws(\gamma_1,\gamma_2)$ is obtained from
$\DG((L_{\gamma_k})_{II})^{-1}(L_\gamma)_{IB}$, $k=1,2$ by taking
columnwise Hadamard products, each entry of $\Ws(\gamma_1,\gamma_2)$ is
also analytic on $\gamma_1$ and $\gamma_2$ when $\gamma_1,\gamma_2\in
\complex^E_0$. Finally taking the determinant of a matrix with analytic
entries is also analytic. 
\end{proof}

We are now ready to prove the main result.
\begin{proof}[Proof of Theorem~\ref{thm:bus}]
By lemma~\ref{lem:subs}, the theorem hypothesis means that there is
some multi-index $\beta \in \{1,\ldots,|B|^2\}^{|E|}$ such that
\[
 \fs_\beta(\rho_1,\rho_2) \neq 0, ~\text{for some $\rho_1,\rho_2 \in
 \complex^{|E|}_0$}.
\] 
Thus $\fs_\beta$ is not identically zero. In fact its zero set restricted
to $\complex^{|E|}_0 \times \complex^{|E|}_0$ 
\begin{equation}
Z(\fs_\beta) = \Mcb{ (\gamma_1,\gamma_2) \in \complex^{|E|}_0 \times
\complex^{|E|}_0 ~ | ~ \fs_\beta(\gamma_1,\gamma_2) = 0 },
\label{eq:zerosets}
\end{equation}
must be a set of measure zero (see e.g.
\cite{Gunning:1965:AFS}), where we use the Lebesgue measure on
$\complex^{|E|} \times \complex^{|E|}$. By lemma~\ref{lem:subs},
the subset $S$ of $\complex^{|E|}_0 \times \complex^{|E|}_0$ on which
$\Ps(\gamma_1,\gamma_2)
\neq \complex^E$ is
\[
 S = \bigcap_{\alpha \in \{1,\ldots,|B|^2\}^{|E|}} Z(\fs_\alpha).
\]
Since $Z(\fs_\beta)$ has measure zero, the set $S \subset Z(\fs_\beta)$ must
have measure zero as well by monotonicity of the Lebesgue measure.
\end{proof}

Using the notation in this section we get the following.
\begin{proof}[Proof of corollary~\ref{cor:newton}]
 We start by showing (i).  If the linearized problem at $\gamma$ is
 solvable, then there is some multi-index $\beta$ for which
 $\fs_\beta(\gamma,\gamma) \neq 0$.  The function $\fs^{(1)}:\complex
 \to \complex$ given by
 \[
  \fs^{(1)} (t) = \fs_\beta(\gamma + t\delta\gamma,\gamma +
 t\delta\gamma)
 \] 
 is an analytic function of the single variable $t$ on the set
 \[ 
  K = \{ t \in \complex ~|~ \Re(\gamma + t\delta\gamma) > 0 \}.
 \]
 The set $K$ is a finite intersection of open convex sets (open half
 planes) containing a neighborhood of the origin and is thus also open,
 convex and connected. Moreover, $\fs^{(1)}(t)$ is a rational function
 in $t$ with no poles in $K$ and thus it may have only finitely many
 $t\in K$ for which $\fs^{(1)}(t) = 0$.  Part (ii) follows similarly
 from considering the function $\fs^{(2)} : \complex \to \complex$ given
 by
 \[
  \fs^{(2)} (t) = \fs_\beta(\gamma ,\gamma +
 t\delta\gamma),
 \] 
 which is also a rational function in $t$ with no poles in $K$.
\end{proof}

\section{Solvability for the inverse Schr\"odinger problem}
\label{sec:schroe}
The same technique can be used to study the solvability for the inverse
Schr\"odinger problem. First we relate a difference in boundary data to a
difference of Schr\"odinger potentials  (\S\ref{sec:qint}). The
appropriate products of solutions subspace is defined in
\S\ref{sec:qprod}.  The proof of the uniqueness result 
is deferred to Appendix~\ref{sec:qbur}.

\subsection{An interior identity}
\label{sec:qint}

Let us write a relation similar to the continuum relation in
\cite{Sylvester:1987:GUT} that relates a difference in boundary data to
a difference in Schr\"odinger potentials.

\begin{lemma}[Interior identity for Schr\"odinger potentials]
\label{lem:qint} 
Let $\gamma \in \complex_0^E$ and $q_1,q_2 \in \complex^I$ be such that
the $\gamma,q_1$ and $\gamma,q_2$ Dirichlet problems are well-posed. Let
$u$ be $\gamma,q_1$ harmonic and $v$ be $\gamma,q_2$ harmonic. Then we
have the following identity
\begin{equation}
\label{eq:qint}
u_B^T ( \Lambda_{\gamma,q_1} - \Lambda_{\gamma,q_2}) v_B = u_I^T
\diag(q_1 - q_2) v_I = (q_1 - q_2)^T(u_I \odot v_I).
\end{equation}
\end{lemma}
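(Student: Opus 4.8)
The plan is to mirror the proof of Lemma~\ref{lem:sint} almost verbatim, replacing the role of the conductivity difference by the Schr\"odinger potential difference. The key tool is the discrete Green identity (Lemma~\ref{lem:gi}), which is stated for a fixed conductivity \(\gamma\) and a fixed potential \(q\), and which requires only that the test function \(v\) be \(\gamma,q\)~harmonic (no harmonicity is needed of \(u\)).

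First I would apply Lemma~\ref{lem:gi} twice: once with the pair \(\gamma,q_1\), taking \(v=u\) as the arbitrary function and \(v\) itself (the \(\gamma,q_1\)~harmonic function \(u\))\,---\,more precisely, apply it with the arbitrary function \(u\) and the \(\gamma,q_1\)~harmonic function \(u\) (in the notation of Lemma~\ref{lem:gi}, its ``\(v\)'' is our \(u\)), to get
\[
 u_B^T \Lambda_{\gamma,q_1} u_B = u^T L_\gamma u + u_I^T \diag(q_1) u_I .
\]
Wait\,---\,the correct pairing is: apply Lemma~\ref{lem:gi} with arbitrary function \(u\) and \(\gamma,q_1\)-harmonic function equal to our \(u\) only if \(u\) is harmonic; but here \(u\) is \(\gamma,q_1\)~harmonic, so this is legitimate and yields \(u_B^T \Lambda_{\gamma,q_1} u_B\) on the left. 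However, what we actually need involves \(u\) and \(v\) with \emph{different} potentials, so the right substitution is: apply Lemma~\ref{lem:gi} for the pair \(\gamma,q_1\) with the arbitrary function taken to be \(v\) and the \(\gamma,q_1\)-harmonic function taken to be \(u\), giving
\[
 v_B^T \Lambda_{\gamma,q_1} u_B = v^T L_\gamma u + v_I^T \diag(q_1) u_I,
\]
and symmetrically apply it for the pair \(\gamma,q_2\) with arbitrary function \(u\) and \(\gamma,q_2\)-harmonic function \(v\), giving
\[
 u_B^T \Lambda_{\gamma,q_2} v_B = u^T L_\gamma v + u_I^T \diag(q_2) v_I.
\]

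Next I would use symmetry of the relevant bilinear forms: \(L_\gamma\) is symmetric (it equals \(\DG^T \diag(\gamma)\DG\)), so \(v^T L_\gamma u = u^T L_\gamma v\), and \(\diag(q_1)\) is diagonal hence symmetric, so \(v_I^T \diag(q_1) u_I = u_I^T \diag(q_1) v_I\); likewise \(v_B^T \Lambda_{\gamma,q_1} u_B = u_B^T \Lambda_{\gamma,q_1} v_B\) since the DtN map is symmetric (being a Schur complement of a symmetric matrix). Subtracting the \(q_2\) identity from the \(q_1\) identity, the \(u^T L_\gamma v\) terms cancel, leaving
\[
 u_B^T(\Lambda_{\gamma,q_1} - \Lambda_{\gamma,q_2}) v_B = u_I^T \diag(q_1 - q_2) v_I,
\]
and the final expression \((q_1-q_2)^T(u_I \odot v_I)\) is just a rewriting of \(u_I^T \diag(q_1-q_2) v_I = \sum_{i\in I}(q_1(i)-q_2(i)) u_I(i) v_I(i)\). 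I expect no real obstacle here; the only point requiring a moment's care is the correct bookkeeping of which function plays the ``harmonic'' role in each application of Lemma~\ref{lem:gi} and invoking the symmetry of \(L_\gamma\), \(\diag(q_j)\), and \(\Lambda_{\gamma,q_j}\) so that the ambient-space terms cancel cleanly.
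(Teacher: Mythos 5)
Your proposal is correct and follows essentially the same route as the paper: two applications of the discrete Green identity (Lemma~\ref{lem:gi}), one for each potential, followed by subtraction so that the $u^T L_\gamma v$ terms cancel. The only difference is that you make explicit the symmetry of $L_\gamma$, $\diag(q_j)$ and $\Lambda_{\gamma,q_j}$ needed to swap the roles of $u$ and $v$ in the first application, a step the paper's proof leaves implicit.
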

\begin{proof}
The proof is very similar to the proof in the continuum 
\cite{Sylvester:1987:GUT}. Since $u$ is $\gamma,q_1$ harmonic,
lemma~\ref{lem:gi} guarantees that
\begin{equation}
 u_B^T \Lambda_{\gamma,q_1} v_B = u^T L_\gamma v + u_I^T \diag(q_1) v_I.
 \label{eq:gi1}
\end{equation}
 Since $v$ is $\gamma,q_2$
harmonic we also have
\begin{equation}
 u_B^T \Lambda_{\gamma,q_2} v_B = u^T L_\gamma v + u_I^T \diag(q_2) v_I.
 \label{eq:gi2}
\end{equation}
The desired result is obtained by subtracting \eqref{eq:gi2} from
\eqref{eq:gi1}.
\end{proof}

\subsection{Uniqueness almost everywhere}
\label{sec:qprod}
As in the uniqueness proof for the continuum Schr\"odinger
problem \cite{Sylvester:1987:GUT}, we would
like to study the subspaces of $\complex^I$,
\begin{equation}
\Pq(q_1,q_2) \equiv
 \linspan \Mcb{ u_I \odot v_I ~|~ \text{$u$ is $\gamma,q_1$ harmonic and
 $v$ is $\gamma,q_2$ harmonic} },
\label{eq:pqq}
\end{equation}
for potentials $q_1$ and $q_2$ that ensure the corresponding Dirichlet
problems are well-posed.  If the two potentials $q_1$ and $q_2$ gave
identical boundary data, i.e. $\Lambda_{\gamma,q_1} =
\Lambda_{\gamma,q_2}$, then lemma~\ref{lem:qint} implies that
$\overline{q}_1 - \overline{q}_2 \in \Pq(q_1,q_2)^\perp$. If in
addition we had that $\Pq(q_1,q_2) = \complex^I$ we could conclude that
$q_1 = q_2$, which gives uniqueness. The following theorem is very
similar to the uniqueness theorem~\ref{thm:bus} for conductivities, but
with Dirichlet well-posedness conditions that are slightly more
complicated. 
\begin{theorem}[Uniqueness almost everywhere for Schr\"odinger potentials]
\label{thm:bur} 
Let $\gamma \in \complex_0^E$ and $\zeta = -\lambda_{\min}(\Re
(L_\gamma)_{II})$.  If $\Pq(p_1,p_2) = \complex^I$ for some $p_1,p_2 \in
\complex^I_\zeta$, then $\Pq(q_1,q_2) = \complex^I$ for almost all
$(q_1,q_2) \in \complex^I_\zeta \times \complex^I_\zeta$.
\end{theorem}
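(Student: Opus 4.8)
The plan is to transcribe, almost verbatim, the argument that proved theorem~\ref{thm:bus}, replacing variations of the conductivity $\gamma$ by variations of the potential $q$ while keeping $\gamma\in\complex_0^E$ fixed. First I would prove the analogue of lemma~\ref{lem:fins}: if $u^{(i)}$ (resp.\ $v^{(i)}$) is $\gamma,q_1$ (resp.\ $\gamma,q_2$) harmonic with boundary data $e_i$, $i\in B$, then $\Pq(q_1,q_2)=\linspan\{u_I^{(i)}\odot v_I^{(j)}\}_{i,j\in B}$; the proof is identical, using well-posedness of the two Dirichlet problems to expand an arbitrary $\gamma,q_k$ harmonic function in the basis $\{u^{(i)}\}$ (resp.\ $\{v^{(i)}\}$). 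Consequently $\Pq(q_1,q_2)=\colspace(\Wq(q_1,q_2))$, where $\Wq(q_1,q_2)\in\complex^{|I|\times|B|^2}$ has columns
\[
[\Wq(q_1,q_2)]_{:,i+(j-1)|B|}=\Mb{((L_\gamma)_{II}+\diag(q_1))^{-1}(L_\gamma)_{IB}}_{:,i}\odot\Mb{((L_\gamma)_{II}+\diag(q_2))^{-1}(L_\gamma)_{IB}}_{:,j},
\]
the sign in \eqref{eq:int} being irrelevant to the span. As in lemma~\ref{lem:subs}, $\Pq(q_1,q_2)=\complex^I$ if and only if $\Wq(q_1,q_2)$ has rank $|I|$, which holds if and only if one of the maximal minors $\fq_\alpha(q_1,q_2)\equiv\det[\Wq(q_1,q_2)]_{:,\alpha}$, with $\alpha$ ranging over strictly increasing $|I|$-tuples in $\{1,\dots,|B|^2\}$, is nonzero (when $|B|^2<|I|$ every such minor has a repeated column and vanishes).

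The key step is the analogue of lemma~\ref{lem:sfanalytic}: each $\fq_\alpha$ is analytic as a function of $(q_1,q_2)$ on $\complex_\zeta^I\times\complex_\zeta^I$, and this is where the precise value $\zeta=-\lambda_{\min}(\Re(L_\gamma)_{II})$ is used. Since $\Re(L_\gamma)_{II}=(L_{\gamma'})_{II}$, we have $\zeta=\zeta_\gamma$ in the notation of \eqref{eq:zetagamma}, so steps~1--3 of the proof of theorem~\ref{thm:dir} apply verbatim to show that for every $q\in\complex_\zeta^I$ the matrix $(L_\gamma)_{II}+\diag(q)$ has field of values contained in $\complex_0$, hence is invertible and $\det((L_\gamma)_{II}+\diag(q))\neq0$. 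The set $\complex_\zeta^I$ is a product of open half-planes, hence open, convex and connected, and on it the cofactor formula makes the entries of $((L_\gamma)_{II}+\diag(q))^{-1}$ rational functions of $q$ with nowhere-vanishing denominator, therefore analytic. Columnwise Hadamard products and determinants preserve analyticity, so each $\fq_\alpha$ is analytic on $\complex_\zeta^I\times\complex_\zeta^I$.

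With these two lemmas the argument is exactly that of theorem~\ref{thm:bus}. The hypothesis $\Pq(p_1,p_2)=\complex^I$ gives $\fq_\beta(p_1,p_2)\neq0$ for some multi-index $\beta$, so $\fq_\beta$ is not identically zero on the connected open set $\complex_\zeta^I\times\complex_\zeta^I$ and its zero set $Z(\fq_\beta)$ has Lebesgue measure zero in $\complex^{2|I|}$ (see e.g.\ \cite{Gunning:1965:AFS}). By the rank characterization, the set of $(q_1,q_2)\in\complex_\zeta^I\times\complex_\zeta^I$ with $\Pq(q_1,q_2)\neq\complex^I$ is exactly $\bigcap_\alpha Z(\fq_\alpha)\subset Z(\fq_\beta)$, hence of measure zero by monotonicity. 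I expect the only genuinely new obstacle to be the analyticity step: unlike the conductivity case, where $\det(L_\gamma)_{II}\neq0$ held automatically on $\complex_0^E$, here one must verify that the particular cutoff $\zeta$ keeps $(L_\gamma)_{II}+\diag(q)$ invertible on the whole domain, which forces us to re-run the field-of-values computation of theorem~\ref{thm:dir} rather than merely cite well-posedness for a single potential.
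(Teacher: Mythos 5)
Your proposal is correct and follows essentially the same route as the paper's own proof in Appendix~A: the finite-spanning lemma, the matrix $\Wq(q_1,q_2)$ of columnwise Hadamard products, the determinantal rank characterization via the $\fq_\alpha$, analyticity on $\complex_\zeta^I\times\complex_\zeta^I$ (with invertibility of $(L_\gamma)_{II}+\diag(q)$ supplied by the field-of-values argument of theorem~\ref{thm:dir}, exactly as you note), and the zero-measure property of the zero set of a nonzero analytic function. Your explicit observation that $\zeta=\zeta_\gamma$ so that the cofactor denominators never vanish on the whole domain is precisely the point the paper's lemma~\ref{lem:qfanalytic} relies on.
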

Since the proof of theorem~\ref{thm:bur} is very similar to that of
theorem~\ref{thm:bus}, it is deferred to Appendix~\ref{sec:qbur}.
Clearly, if the hypothesis of the theorem holds, $\Lambda_{\gamma,q_1} =
\Lambda_{\gamma,q_2}$ implies $q_1 = q_2$, for all $(q_1,q_2) \in
\complex^I_\zeta \times \complex^I_\zeta$, except for a set of Lebesgue
measure zero in $\complex^I_\zeta \times \complex^I_\zeta$.  The
linearization of the boundary data $\Lambda_{\gamma,q}$ with respect to
changes in the potential $q$ is as follows.

\begin{lemma}[Linearization for Schr\"odinger potentials]\label{lem:linq}
Assume the Dirichlet problem for $\gamma,q$ is well-posed, then for
sufficiently small $\delta q \in \complex^I$, and any $u_B,v_B \in
\complex^B$,
\begin{equation}
 u_B^T \Lambda_{\gamma,q + \delta q} v_B = u_B^T \Lambda_{\gamma,q} v_B +
 (\delta q)^T (u_I \odot v_I) + o(\delta q),
\end{equation}
where $u=(u_B,u_I)$ and $v = (v_B,v_I)$ are $\gamma,q$ harmonic.
\end{lemma}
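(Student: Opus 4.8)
The plan is to follow the proof of Lemma~\ref{lem:lins} line by line, substituting the interior identity for Schr\"odinger potentials (Lemma~\ref{lem:qint}) for its conductivity counterpart. Fix $\gamma \in \complex_0^E$ and a $q \in \complex^I$ for which the $\gamma,q$ Dirichlet problem is well-posed. Since $q \mapsto (L_\gamma)_{II} + \diag(q)$ is continuous and the set of invertible matrices is open, there is $r>0$ such that the $\gamma,\widetilde q$ Dirichlet problem is well-posed for every $\widetilde q$ with $\|\widetilde q - q\| < r$; in particular, for $\delta q$ with $\|\delta q\| < r$ the entire segment $\{q + \epsilon\delta q : \epsilon \in [0,1]\}$ consists of well-posed potentials, so $\Lambda_{\gamma, q + \epsilon\delta q}$ is defined. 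For such $\delta q$ and $\epsilon \in [0,1]$, let $u^\epsilon = (u_B, u_I^\epsilon)$ be the $\gamma, q + \epsilon\delta q$ harmonic function with boundary data $u_B$, and let $v = (v_B, v_I)$ be the $\gamma, q$ harmonic function with boundary data $v_B$.

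Next I would apply Lemma~\ref{lem:qint} with $q_1 = q + \epsilon\delta q$, $q_2 = q$, and the solutions $u^\epsilon$, $v$ just described, obtaining
\[
 u_B^T ( \Lambda_{\gamma, q + \epsilon\delta q} - \Lambda_{\gamma, q}) v_B = \epsilon\,(\delta q)^T (u_I^\epsilon \odot v_I).
\]
Dividing by $\epsilon$ and letting $\epsilon \to 0$, the one thing to check is that $u_I^\epsilon \to u_I$. This is immediate from the cofactor formula: by \eqref{eq:int},
\[
 u_I^\epsilon = -\big[(L_\gamma)_{II} + \diag(q) + \epsilon\diag(\delta q)\big]^{-1} (L_\gamma)_{IB}\, u_B,
\]
and the right-hand side is a rational, hence continuous, function of $\epsilon$ on a neighbourhood of $0$, equal to $u_I$ at $\epsilon = 0$. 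Hence the directional (G\^ateaux) derivative at $q$ in the direction $\delta q$ of the scalar map $\delta q \mapsto u_B^T \Lambda_{\gamma, q + \delta q} v_B$ exists and equals $(\delta q)^T (u_I \odot v_I)$, which is linear in $\delta q$.

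Finally, to upgrade this to the stated expansion with an $o(\delta q)$ remainder I would observe that, by \eqref{eq:dtn} together with the cofactor formula for $[(L_\gamma)_{II} + \diag(q)]^{-1}$, every entry of $\Lambda_{\gamma,q}$ is a rational function of the entries of $q$ whose denominator $\det((L_\gamma)_{II} + \diag(q))$ is nonzero for $\|\widetilde q - q\| < r$; such a function is $C^\infty$ (indeed analytic in $\Re q, \Im q$), so $\delta q \mapsto u_B^T \Lambda_{\gamma, q + \delta q} v_B$ is Fr\'echet differentiable and its Fr\'echet derivative must coincide with the G\^ateaux derivative computed above, giving the claimed first-order Taylor expansion. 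No genuine obstacle is expected here: the only care needed is to keep $\delta q$ within the ball of radius $r$, so that $\Lambda_{\gamma, q + \delta q}$ is defined, which is precisely the ``sufficiently small $\delta q$'' clause of the statement.
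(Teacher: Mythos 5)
Your proposal is correct and takes essentially the same route as the paper, whose entire proof is to apply the interior identity \eqref{eq:qint} with $q_1 = q+\epsilon\delta q$ and $q_2=q$, divide by $\epsilon$, and let $\epsilon \to 0$. The extra details you supply (well-posedness of the perturbed problem for small $\delta q$, continuity of $u_I^\epsilon$ in $\epsilon$, and the upgrade from G\^ateaux to Fr\'echet differentiability via rationality of the entries of $\Lambda_{\gamma,q}$) are all correct and are simply left implicit in the paper.
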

\begin{proof}
Use the interior identity \eqref{eq:qint} with $q_1 = q+\epsilon \delta q$
and $q_2 = q$, divide by $\epsilon$ and take the limit as $\epsilon \to
0$.
\end{proof}

The previous lemma shows that the mapping $q \to \Lambda_{\gamma,q}$ is
Fr\'echet differentiable. The {\em Jacobian} $D_q\Lambda_{\gamma,q}:
\complex^I \to \complex^{B \times B}$ of $\Lambda_{\gamma,q}$ about $q$
is
\begin{equation}
 D_q\Lambda_{\gamma,q} \delta q = L_{BI} (L_{II} + \diag(q))^{-1}
 \diag(\delta q)  (L_{II} + \diag(q))^{-1} L_{IB},
\end{equation}
where $\delta q \in \complex^I$ and for clarity we omitted the subscript
$\gamma$ in the weighted graph Laplacian $L_\gamma$. We say the
linearized problem is {\em solvable} at $q$, when the Jacobian about $q$
is injective, i.e. $\nullspace(D_q\Lambda_{\gamma,q}) = \{0\}$.
Solvability of the linearized problem at 
$q$ is of course equivalent to $\colspace(D_q\Lambda_{\gamma,q}^*) =
\complex^I = \Pq(q,q)$, where the last equality comes from
lemma~\ref{lem:linq} and the definition of the Jacobian. Hence we have
the following corollaries, which are stated without proof because of
their similarity to those for the conductivity problem.

\begin{corollary}\label{cor:qloc0}
 Let $\gamma \in \complex_0^E$ and $\zeta$ be such that the $\gamma,q$
 Dirichlet problem is well posed when $q \in \complex_\zeta^I$. If there
 is $p_1,p_2 \in \complex_\zeta^I$ such that
 $\Pq(p_1,p_2) = \complex^I$, then any equivalence class of
 Schr\"odinger potentials in $\complex_\zeta^I$ for the equivalence relation
 $\Lambda_{\gamma,q_1}=\Lambda_{\gamma,q_2}$ must be of measure zero in
 $\complex_\zeta^I$.
\end{corollary}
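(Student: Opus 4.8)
The plan is to transfer the conclusion of Theorem~\ref{thm:bur} to equivalence classes exactly as Corollary~\ref{cor:sloc0} transfers the conclusion of Theorem~\ref{thm:bus}; the two arguments are formally identical, with $\Pq$ in place of $\Ps$ and $\complex^I_\zeta$ in place of $\complex^E_0$. First I would invoke Theorem~\ref{thm:bur}: its hypothesis is precisely the assumption that $\Pq(p_1,p_2)=\complex^I$ for some $p_1,p_2\in\complex^I_\zeta$, so it yields that the set
\[
 Z \equiv \Mcb{ (q_1,q_2) \in \complex^I_\zeta \times \complex^I_\zeta ~|~ \Pq(q_1,q_2) \neq \complex^I }
\]
has Lebesgue measure zero in $\complex^I_\zeta \times \complex^I_\zeta$. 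Here the $\zeta$ of the corollary is taken to be the value $-\lambda_{\min}(\Re(L_\gamma)_{II})$ appearing in Theorem~\ref{thm:bur} (or any larger value), so that $\complex^I_\zeta$ sits inside the region on which that theorem operates and on which the Dirichlet problems, hence the maps $\Lambda_{\gamma,q}$, are well-posed.

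Next, let $M\subset\complex^I_\zeta$ be an equivalence class for the relation $\Lambda_{\gamma,q_1}=\Lambda_{\gamma,q_2}$, and write $\Delta_M=\Mcb{(q,q)~|~q\in M}$ for its diagonal. I claim $M\times M \setminus \Delta_M \subset Z$. Indeed, if $q_1,q_2\in M$ with $q_1\neq q_2$ then $\Lambda_{\gamma,q_1}=\Lambda_{\gamma,q_2}$, so Lemma~\ref{lem:qint} gives $(q_1-q_2)^T(u_I\odot v_I)=0$ for every $\gamma,q_1$ harmonic $u$ and every $\gamma,q_2$ harmonic $v$; hence $\overline q_1-\overline q_2 \in \Pq(q_1,q_2)^\perp$, and since $q_1\neq q_2$ this is impossible unless $\Pq(q_1,q_2)\neq\complex^I$, i.e. $(q_1,q_2)\in Z$.

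Finally I would conclude by elementary measure theory: $\Delta_M$ is the graph of a continuous map and hence has measure zero in the product space, while $M\times M\setminus\Delta_M\subset Z$ has measure zero; thus $M\times M$ has measure zero, and by Tonelli's theorem the measure of $M\times M$ equals the square of the measure of $M$, so $M$ has measure zero in $\complex^I_\zeta$. There is no genuine obstacle here: all the analytic content is carried out in the proof of Theorem~\ref{thm:bur} (deferred to Appendix~\ref{sec:qbur}), and the only points demanding care are the bookkeeping of the admissible parameter range $\zeta$ so that Theorem~\ref{thm:bur} really does apply on $\complex^I_\zeta$, and ensuring that well-posedness of all the relevant Dirichlet problems is in force so that both the equivalence relation and the interior identity Lemma~\ref{lem:qint} are meaningful throughout $\complex^I_\zeta$.
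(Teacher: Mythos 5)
Your proposal is correct and follows essentially the same route as the paper: the paper states Corollary~\ref{cor:qloc0} without proof precisely because it mirrors the proof of Corollary~\ref{cor:sloc0}, which invokes the uniqueness-a.e.\ theorem to get a measure-zero bad set $Z$, observes that $M\times M$ minus its diagonal sits inside $Z$, and concludes $M$ has measure zero. Your additional details (the use of Lemma~\ref{lem:qint} to place off-diagonal pairs in $Z$, and Tonelli to pass from the product to the factor) only make explicit what the paper leaves implicit.
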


\begin{corollary}\label{cor:qloc2}
 Let $\gamma \in \complex_0^E$ and $\zeta$ be such that the $\gamma,q$
 Dirichlet problem is well posed when $q \in \complex_\zeta^I$.
 If the linearized problem is solvable at some $p \in \complex^I_\zeta$,
 then $\Lambda_{\gamma,q_1} = \Lambda_{\gamma,q_2}$ implies $q_1 = q_2$,
 for almost all $(q_1,q_2) \in \complex^I_\zeta \times \complex^I_\zeta$.
\end{corollary}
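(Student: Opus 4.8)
The plan is to obtain this as a direct consequence of Theorem~\ref{thm:bur}, mirroring the passage from Theorem~\ref{thm:bus} to Corollary~\ref{cor:sloc2} in the conductivity case. The one structural input is the identification, recorded just before the statement, that solvability of the linearized problem at $p$ is equivalent to $\colspace(D_q\Lambda_{\gamma,p}^*) = \complex^I$, and that by Lemma~\ref{lem:linq} and the formula for the Jacobian this range is exactly $\Pq(p,p)$. So ``linearized solvability at $p$'' is the same statement as ``$\Pq(p,p) = \complex^I$'', which is the $p_1 = p_2 = p$ instance of the hypothesis of Theorem~\ref{thm:bur}.

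First I would translate the hypothesis: linearized solvability at $p \in \complex^I_\zeta$ gives $\Pq(p,p) = \complex^I$. Here one should take $\zeta = -\lambda_{\min}(\Re(L_\gamma)_{II})$, which by Step~2 of the proof of Theorem~\ref{thm:dir} does make the $\gamma,q$ Dirichlet problem well-posed on $\complex^I_\zeta$; if the $\zeta$ in the corollary's well-posedness hypothesis is larger, one simply restricts attention to the smaller set $\complex^I_\zeta\times\complex^I_\zeta$, on which ``almost all'' is unaffected. Then I would apply Theorem~\ref{thm:bur} with $p_1 = p_2 = p$ to conclude that
\[
 Z = \Mcb{ (q_1,q_2) \in \complex^I_\zeta \times \complex^I_\zeta ~|~ \Pq(q_1,q_2) \neq \complex^I }
\]
has Lebesgue measure zero in $\complex^I_\zeta\times\complex^I_\zeta$. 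Next I would invoke the interior identity, Lemma~\ref{lem:qint}: if $(q_1,q_2)\notin Z$ and $\Lambda_{\gamma,q_1} = \Lambda_{\gamma,q_2}$, then for $u$ that is $\gamma,q_1$ harmonic and $v$ that is $\gamma,q_2$ harmonic the left-hand side of \eqref{eq:qint} vanishes for all boundary data, so $(q_1 - q_2)^T(u_I\odot v_I) = 0$ for every such pair; hence $\overline q_1 - \overline q_2 \in \Pq(q_1,q_2)^\perp = (\complex^I)^\perp$, forcing $q_1 = q_2$. Therefore the set of $(q_1,q_2)$ with $\Lambda_{\gamma,q_1} = \Lambda_{\gamma,q_2}$ but $q_1 \neq q_2$ is contained in $Z$ and so has measure zero, which is precisely the assertion.

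I do not expect a genuine obstacle: the argument is a verbatim transcription of the conductivity case, and the analyticity machinery has already been discharged inside Theorem~\ref{thm:bur}. The only point deserving a line of care is the bookkeeping of $\zeta$, namely matching the threshold appearing in the well-posedness hypothesis of the corollary with the specific value $\zeta = -\lambda_{\min}(\Re(L_\gamma)_{II})$ demanded by Theorem~\ref{thm:bur}, which is handled by the restriction remark above. Everything else—monotonicity of Lebesgue measure, and the fact that the span over all harmonic $u,v$ of the products $u_I\odot v_I$ is by definition $\Pq(q_1,q_2)$—is routine.
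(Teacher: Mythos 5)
Your proof is correct and follows exactly the route the paper intends: the paper states this corollary without proof, deferring to the conductivity analogue (Corollary~\ref{cor:sloc2}), which is obtained precisely by identifying linearized solvability at $p$ with $\Pq(p,p)=\complex^I$, applying Theorem~\ref{thm:bur} with $p_1=p_2=p$, and then using the interior identity of Lemma~\ref{lem:qint} off the measure-zero exceptional set. Your extra care about matching the $\zeta$ in the corollary with the one in Theorem~\ref{thm:bur} is a reasonable bit of bookkeeping that the paper glosses over, and it does not change the argument.
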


\begin{corollary}\label{cor:qloc1}
 Let $\gamma \in \complex_0^E$ and $\zeta$ be such that the $\gamma,q$
 Dirichlet problem is well posed when $q \in \complex_\zeta^I$.
 If the linearized problem is solvable about some $p \in
 \complex^I_\zeta$, the
 linearized problem is solvable for almost all $q \in \complex^I_\zeta$.
\end{corollary}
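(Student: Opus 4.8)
The plan is to adapt the proof of theorem~\ref{thm:bur} (carried out in appendix~\ref{sec:qbur}) to the diagonal $q_1 = q_2 = q$, exactly as corollary~\ref{cor:sloc1} is obtained from theorem~\ref{thm:bus}. Recall, as noted just before corollary~\ref{cor:qloc0}, that the linearized Schr\"odinger problem is solvable at $q$ precisely when $\Pq(q,q) = \complex^I$. By the Schr\"odinger counterpart of lemma~\ref{lem:subs}, this is equivalent to the non-vanishing of at least one determinant $\fq_\alpha(q,q)$, where $\fq_\alpha(q_1,q_2)$ denotes the determinant of the $|I| \times |I|$ submatrix of the product-of-solutions matrix $\Wq(q_1,q_2)$ selected by the multi-index $\alpha$.

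First I would record that each $\fq_\alpha$ is complex analytic on $\complex^I_\zeta \times \complex^I_\zeta$, the exact analogue of lemma~\ref{lem:sfanalytic}. The one ingredient needed is that $(L_\gamma)_{II} + \diag(q)$ is invertible for every $q \in \complex^I_\zeta$; this is precisely the hypothesis on $\zeta$ (and holds for instance when $\zeta = -\lambda_{\min}(\Re (L_\gamma)_{II})$, by Step~2 of the proof of theorem~\ref{thm:dir}). The cofactor formula then exhibits the entries of $((L_\gamma)_{II}+\diag(q_k))^{-1}$, and hence the entries of $\Wq(q_1,q_2)$ (columnwise Hadamard products of $\DG((L_\gamma)_{II}+\diag(q_k))^{-1}(L_\gamma)_{IB}$, $k=1,2$), and hence the $\fq_\alpha$, as rational functions of $(q_1,q_2)$ whose denominators never vanish on $\complex^I_\zeta \times \complex^I_\zeta$; such functions are analytic there.

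Next I would restrict to the diagonal, putting $h_\alpha(q) \equiv \fq_\alpha(q,q)$. Since $q \mapsto (q,q)$ is linear and $\complex^I_\zeta$ is an open convex (hence connected) subset of $\complex^I$, each $h_\alpha$ is analytic on $\complex^I_\zeta$. By hypothesis the linearized problem is solvable at some $p \in \complex^I_\zeta$, so $h_\beta(p) \neq 0$ for some $\beta$, and therefore $h_\beta$ is not identically zero on the connected set $\complex^I_\zeta$. Consequently its zero set
\[
 Z(h_\beta) = \Mcb{ q \in \complex^I_\zeta ~|~ h_\beta(q) = 0 }
\]
has Lebesgue measure zero in $\complex^I$ (see e.g.\ \cite{Gunning:1965:AFS}). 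Finally, the set of $q \in \complex^I_\zeta$ at which the linearized problem fails to be solvable is exactly $\bigcap_\alpha Z(h_\alpha) \subset Z(h_\beta)$, which therefore also has measure zero, giving the claim.

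The only delicate points — and they are minor — are (i) having the Schr\"odinger analogues of lemmas~\ref{lem:fins}, \ref{lem:subs} and \ref{lem:sfanalytic} in hand, which is exactly why theorem~\ref{thm:bur}'s proof is deferred to the appendix, and (ii) checking that restricting $\fq_\beta$ to the diagonal neither destroys analyticity nor forces it to vanish identically. Point (ii) is immediate once one notes that $\complex^I_\zeta$ is convex and that $\fq_\beta(p,p) \neq 0$ by assumption; no genuinely new idea beyond the proof of theorem~\ref{thm:bur} is needed.
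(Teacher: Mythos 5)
Your proof is correct and follows exactly the route the paper intends: the paper states corollary~\ref{cor:qloc1} without proof, remarking only that it follows by the same technique as theorem~\ref{thm:bur} (restrict the analytic determinants $\fq_\alpha$ to the diagonal $q_1=q_2=q$ and use that the zero set of a not-identically-zero analytic function on the connected open set $\complex^I_\zeta$ has measure zero). Your handling of the two ``delicate points'' matches the paper's lemmas~\ref{lem:subq} and \ref{lem:qfanalytic}, so there is nothing to add.
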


\begin{corollary}\label{cor:qnewton}
Let $\gamma \in \complex_0^E$ and $\zeta$ be such that the $\gamma,q$
 Dirichlet problem is well posed when $q \in \complex_\zeta^I$.
If the linearized problem is solvable at $q \in \complex^I_\zeta$ then
along any direction $\delta q \in \complex^I$ there are at most finitely
many $t\in \complex$ with $\Re(q + t \delta q) > 0$ for which
\begin{itemize}
 \item[i.] the linearized problem
 is not solvable at $q + t \delta q$.
 \item[ii.]  $q$ and $q+t\delta q$ may have the same DtN
 map (i.e. $\Pq(q,q + t \delta q) \neq \complex^I$).
\end{itemize}
\end{corollary}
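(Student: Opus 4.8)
The plan is to mirror, almost verbatim, the proof of Corollary~\ref{cor:newton} for conductivities, replacing the conductivity determinants $\fs_\alpha$ with their Schr\"odinger analogues. Appendix~\ref{sec:qbur} will presumably introduce, for fixed $\gamma \in \complex_0^E$, a matrix $\Wq(q_1,q_2) \in \complex^{|I| \times |B|^2}$ whose columns are all Hadamard products of columns of $U(\gamma,q_1)$ and $U(\gamma,q_2)$, where $U(\gamma,q) = [\mathbb{I}_B;\, -((L_\gamma)_{II}+\diag(q))^{-1} L_{IB}]$, together with the corresponding maximal minors $\fq_\alpha(q_1,q_2) = \det[\Wq(q_1,q_2)]_{:,\alpha}$ for multi-indices $\alpha \in \{1,\ldots,|B|^2\}^{|I|}$. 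The Schr\"odinger analogues of Lemmas~\ref{lem:subs} and \ref{lem:sfanalytic} then give: (a) $\Pq(q_1,q_2) = \complex^I$ iff $\fq_\alpha(q_1,q_2) \neq 0$ for some admissible $\alpha$, and (b) each $\fq_\alpha$ is analytic on $\complex_\zeta^I \times \complex_\zeta^I$, since the cofactor formula makes the entries of $((L_\gamma)_{II}+\diag(q))^{-1}$ rational in $q$ with nonvanishing denominator $\det((L_\gamma)_{II}+\diag(q))$ throughout $\complex_\zeta^I$ (this nonvanishing being exactly the well-posedness guaranteed by the field-of-values argument in the proof of Theorem~\ref{thm:dir}, with $\zeta = -\lambda_{\min}(\Re(L_\gamma)_{II})$).

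With that machinery in hand, the argument is as follows. For part (i), solvability of the linearized problem at $q$ means, by the equality $\Pq(q,q) = \colspace(D_q\Lambda_{\gamma,q}^*)$ noted after Lemma~\ref{lem:linq}, that $\Pq(q,q) = \complex^I$, hence $\fq_\beta(q,q) \neq 0$ for some multi-index $\beta$. Define $\fq^{(1)}(t) = \fq_\beta(q + t\delta q,\, q + t\delta q)$. The set $K = \{ t \in \complex \mid \Re(q + t\delta q) > 0 \}$ is a finite intersection of open half-planes, each containing a neighborhood of $0$ (since $\Re(q) > 0$), hence $K$ is open, convex, connected, and nonempty. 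On $K$ the function $\fq^{(1)}$ is a restriction of an analytic function of $q_1,q_2$ to an affine line, hence analytic in $t$; more concretely it is a rational function of $t$ (entries of the inverse are rational in $q$, and $\det((L_\gamma)_{II}+\diag(q+t\delta q))$ is a nonzero polynomial in $t$ that does not vanish on $K$), so $\fq^{(1)}$ has no poles in $K$ and is not identically zero (it is nonzero at $t=0$). A nonzero analytic (indeed rational, pole-free) function on a connected open set has only finitely many zeros in any compact subset; since $K$ is an intersection of half-planes one checks directly that a rational function with no poles in $K$ has finitely many zeros there. Each such zero $t$ with $\Re(q+t\delta q)>0$ is, by the analogue of Lemma~\ref{lem:subs}, precisely a point where $\Pq(q+t\delta q,\, q+t\delta q) \neq \complex^I$, i.e. where the linearized problem fails to be solvable, establishing (i). Part (ii) is identical with $\fq^{(2)}(t) = \fq_\beta(q,\, q + t\delta q)$ in place of $\fq^{(1)}$: this is again rational in $t$ with denominator $\det((L_\gamma)_{II}+\diag(q+t\delta q))$, pole-free on $K$, and nonzero at $t=0$, so it has finitely many zeros in $\{t \in K \mid \Re(q+t\delta q)>0\}$, and by Lemma~\ref{lem:qint} (the interior identity) these zeros contain all $t$ for which $q$ and $q+t\delta q$ can fail to be distinguished by boundary data, i.e. all $t$ with $\Pq(q,q+t\delta q) \neq \complex^I$.

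The only point requiring minor care beyond transcription is making sure the line $t \mapsto q + t\delta q$ lands inside the domain where everything is analytic. The condition imposed in the statement, $\Re(q + t\delta q) > 0$, is stronger than $q + t\delta q \in \complex_\zeta^I$ (because $\zeta = -\lambda_{\min}(\Re(L_\gamma)_{II}) < 0$ by Step~1 of the proof of Theorem~\ref{thm:dir}), so every $t$ under consideration lies in $K \subset \{t \mid q+t\delta q \in \complex_\zeta^I\}$, on which $\fq_\beta$ is analytic and the relevant determinant is nonzero; hence restricting attention to such $t$ is harmless. I expect no real obstacle here: the substantive work—constructing $\Wq$, $\fq_\alpha$, proving their analyticity and the rank characterization—is exactly the content of Appendix~\ref{sec:qbur}, and once it is available this corollary follows by the one-complex-variable specialization used for Corollary~\ref{cor:newton}, the only genuinely new ingredient being the observation that the pole set of each $\fq_\alpha^{(k)}$, being $\{t : \det((L_\gamma)_{II}+\diag(q+t\delta q)) = 0\}$, misses $K$.
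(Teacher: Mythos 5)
Your proposal is correct and matches the paper's intended argument: the paper states this corollary without proof, deferring to the proof of Corollary~\ref{cor:newton} together with the machinery of Appendix~\ref{sec:qbur} ($\Wq$, the minors $\fq_\alpha$, and Lemmas~\ref{lem:subq} and \ref{lem:qfanalytic}), which is precisely what you reconstruct and specialize to one complex variable. The one point to tidy is the parenthetical ``since $\Re(q)>0$'': the hypothesis only gives $q\in\complex_\zeta^I$ with $\zeta<0$, but this is harmless because $\fq^{(1)}$ and $\fq^{(2)}$ are fixed rational functions of $t$ that are nonzero at $t=0$, hence not identically zero and with finitely many zeros in all of $\complex$, regardless of whether $t=0$ lies in $K$.
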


As in the case of conductivities, corollary~\ref{cor:qnewton} guarantees
that when Newton's algorithm is used to find the Schr\"odinger potential
in a graph, it is very likely that the Newton systems that are solved at
each iteration have a unique solution (except possibly at finitely many
points). Corollary~\ref{cor:qloc2} suggests that checking whether the
linearized problem about $p=0$ is solvable is enough to guarantee
Schr\"odinger potentials in $\complex^I_\zeta$ can be recovered (up to a
zero measure set). Of course this is only a sufficient condition, and
this test may miss some rare cases where $\Pq(0,0) \neq \complex^I$ and
yet the Schr\"odinger problem is almost everywhere solvable. Finally
probabilistic interpretations similar to those in \S\ref{sec:proba} can
be made for theorem~\ref{thm:bur} and corollaries~\ref{cor:qloc1} and
\ref{cor:qloc2}.
\begin{remark}\label{rem:qreal}
As in remark~\ref{rem:sreal}, we can get results analogous to
theorem~\ref{thm:bur} and corollaries~\ref{cor:qloc0}, \ref{cor:qloc1},
\ref{cor:qloc2} and \ref{cor:qnewton}, but where $\complex$ is replaced
by $\real$. The set $\real_\zeta^I$ is to be understood as the set of $q
\in \real^I$ for which $q>\zeta$, where the inequality is componentwise.
\end{remark}

\section{Numerical study}
\label{sec:numerics}
We start by explaining in \S\ref{sec:tests} how to use the singular
value decomposition and our theoretical results to check solvability on
a graph. Examples of non-planar graphs where either the conductivity or
Schr\"odinger problems are solvable are given in \S\ref{sec:examples}.
Our a.e. uniqueness results allow for conductivities (or Schr\"odinger
potentials) in zero measure sets to have the same boundary data. We
visualize slices of these zero measure sets in \S\ref{sec:zero}. Finally
we present in \S\ref{sec:stat} a statistical study of recoverability,
which shows that it is much more likely for the Schr\"odinger problem to
be solvable on a graph than the conductivity problem. All the code
needed for reproducing the figures and numerical results in this section
is available in \cite{thecode}.

\subsection{Recoverability tests}
\label{sec:tests}
Corollaries \ref{cor:sloc2} and \ref{cor:qloc2} give us a relatively
simple test to check  whether the conductivity (or Schr\"odinger)
problem on a graph is recoverable (in the weak sense we consider). All
we need to do is estimate the rank of the matrices $D_\gamma
\Lambda_{1,0}$ (for the conductivity problem) or $D_q \Lambda_{1,0}$
(for the Schr\"odinger problem). If the rank of these matrices is at
least the number of degrees of freedom ($|E|$ for the conductivity and
$|I|$ for the Schr\"odinger problem), then the problem is recoverable.
Here we chose for simplicity to linearize about the constant
conductivity of all ones or about the zero Schr\"odinger potential.
This choice of linearization point is rather arbitrary and could give
false negatives, i.e.  graphs on which the problem is recoverable but
where the test says otherwise. One could remedy this by choosing the
linearization point at random, but we did not see significant changes
our numerical experiments because of this.

For the mathematical argument we used determinants to find the rank of a
matrix. In our numerical experiments, we use instead the singular value
decomposition, as it is numerically robust. Following e.g.
\cite{Golub:2013:MC}, we say that a $n\times m$ matrix $A \neq 0$ with
$n\leq m$  has rank $r$ at a tolerance $\delta>0$ if we have
\begin{equation}
 1 \geq \frac{\sigma_2}{\sigma_1} \geq \ldots \geq
 \frac{\sigma_r}{\sigma_1} > \delta
 \geq \frac{\sigma_{r+1}}{\sigma_1} \geq \ldots \geq
 \frac{\sigma_n}{\sigma_1},
\end{equation}
where $\sigma_1 \geq \ldots \geq \sigma_n$, are the singular values of $A$.

\subsection{Some recoverable graphs}
\label{sec:examples}
We give some examples of graphs where the conductivity
(figure~\ref{fig:ssample}) and the Schr\"odinger potential
(figure~\ref{fig:qsample}) are recoverable.  The examples of
figures~\ref{fig:ssample} and \ref{fig:qsample} illustrate that our theory
allows for graphs that are not planar.

\begin{figure}
 \begin{center}
 \begin{tabular}{c@{\hspace{3em}}c}
  \includegraphics[width=0.30\textwidth]{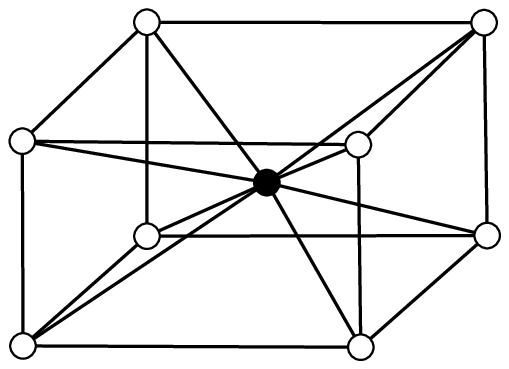}&
  \includegraphics[width=0.30\textwidth]{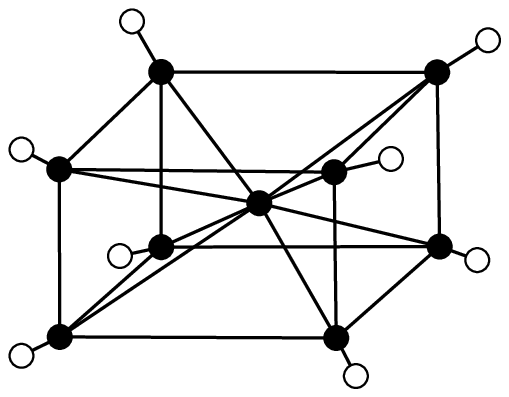}\\
  (a) & (b)
 \end{tabular}
 \end{center}
 \caption{Examples of non-planar graphs where the conductivity is
 recoverable. The boundary nodes are in white and the interior
 nodes in black.}
 \label{fig:ssample}
\end{figure}

\begin{figure}
 \begin{center}
 \begin{tabular}{c@{\hspace{3em}}c}
  \includegraphics[width=0.30\textwidth]{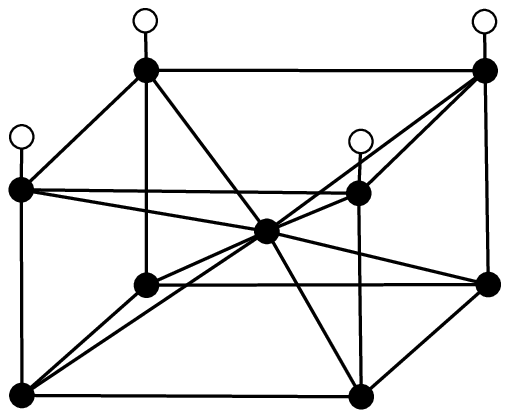} &
  \includegraphics[width=0.30\textwidth]{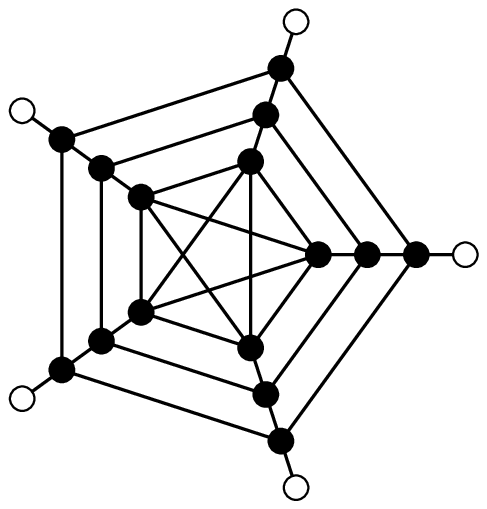}\\
  (a) & (b)
 \end{tabular}
 \end{center}
 \caption{Examples of non-planar graphs where the Schr\"odinger
 potential is recoverable. The boundary nodes are in white and the
 interior nodes in black.}
 \label{fig:qsample}
\end{figure}

\subsection{Examples of indistinguishable conductivities and
Schr\"odinger potentials}
\label{sec:zero}
We can also get a glimpse of the conductivities (or Schr\"odinger
potentials) that could have the same DtN maps. For the conductivity
problem we fix the topology of the graph, choose two directions
$\delta\gamma_1$ and $\delta\gamma_2$, and compute the smallest singular
value (rescaled by the largest)  of the products of gradients matrix
$\Ws(\gamma_1(x),\gamma_2(y))$, with $\gamma_1(x) = 1 + x
\delta\gamma_1$ and $\gamma_2(y) = 1 + y \delta\gamma_2$, for many
$(x,y) \in \real_0^2$ (the products of gradients matrix is defined in
\eqref{eq:sprodmat}). An example of this
is shown if figure~\ref{fig:szero}. Knowing
$\sigma_{\min}/\sigma_{\max}$ (the reciprocal of the conditioning
number) for $\Ws(\gamma_1,\gamma_2)$ is useful when measurements are
tainted by noise. Indeed if the norm of the noise relative to norm of
the measurements is larger than $\sigma_{\min}/\sigma_{\max}$, then for
all practical purposes we may not be able to distinguish these two
conductivities $\gamma_1$ and $\gamma_2$. Thus when there is noise in
the data, we may get a set of conductivities that is not of measure
zero, but that are indistinguishable from boundary data. This is related
to the concept of indistinguishable perturbations in the continuum
conductivity problem \cite{Gisser:1990:ECC}.

We proceed similarly for the Schr\"odinger problem. We fix $\gamma=1$,
choose two Schr\"odinger potentials $q_1$ and $q_2$ and compute the
smallest singular value (rescaled by the largest) of the product of
solutions matrix $\Wq(x q_1,y q_2)$ (to be defined in
\eqref{eq:qprodmat}), for many $(x,y) \in \real_0^2$. This is
illustrated in figure~\ref{fig:qzero}. The same observation as for the
conductivity applies: if the ratio of the norm of the noise to the
norm of the measurements is less than $\sigma_{\min}/\sigma_{\max}$ for
$\Wq(q_1,q_2)$, then for all practical purposes we may not be able to
distinguish between $q_1$ and $q_2$ from boundary measurements.
\begin{figure}
 \begin{center}
 \begin{tabular}{c@{\hspace{3em}}c}
 \raisebox{2em}{\includegraphics[width=0.3\textwidth]{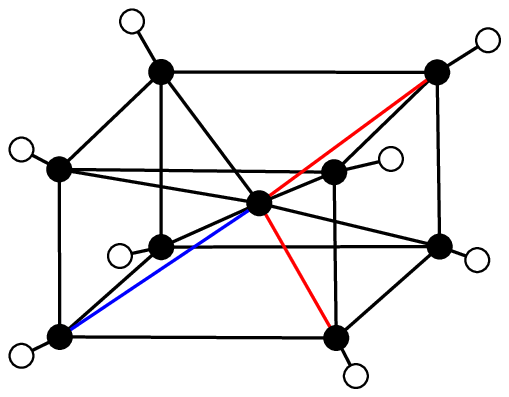}}& 
 \includegraphics[width=0.4\textwidth]{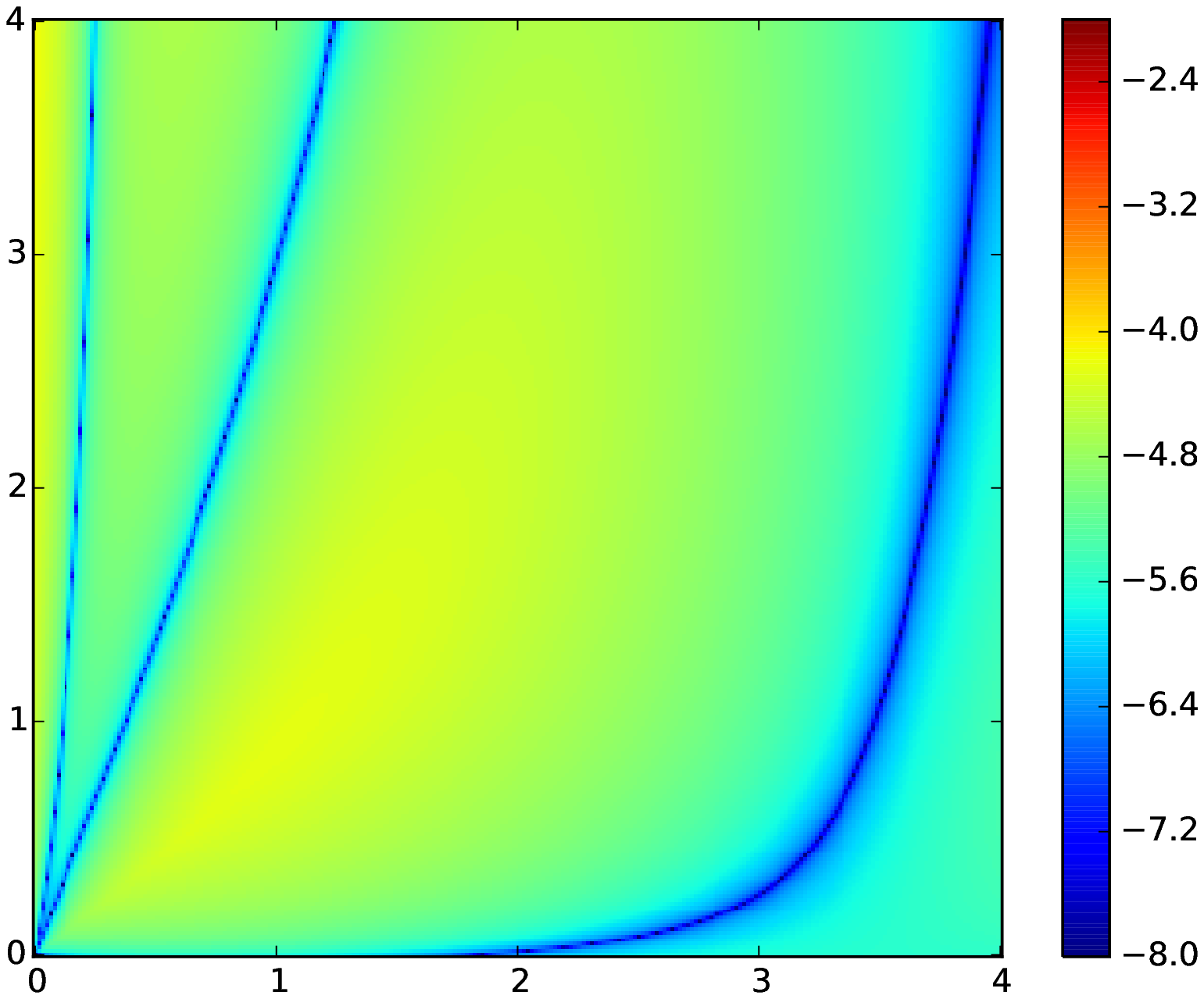}
 \end{tabular}
 \end{center}
 \caption{The smallest singular value (rescaled by the largest) of the
 product of solutions matrices
 $\Ws(1+x\delta\gamma_1,1+y\delta\gamma_2)$ for the graph in
 figure~\ref{fig:ssample}(b) and $(x,y) \in [0,4]^2$.  The
 direction $\delta \gamma_1$ (resp. $\delta \gamma_2$) is the characteristic
 function of the red (resp. blue) edges. The color scale on the right
 corresponds to $\log_{10}(\sigma_{\min}/\sigma_{\max})$.
 }\label{fig:szero}
\end{figure}

\begin{figure}
 \begin{center}
 \begin{tabular}{c@{\hspace{3em}}c}
 \raisebox{0em}{\includegraphics[width=0.3\textwidth]{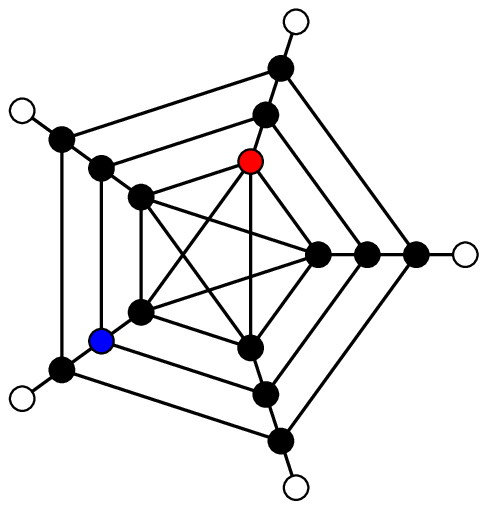}}& 
 \includegraphics[width=0.4\textwidth]{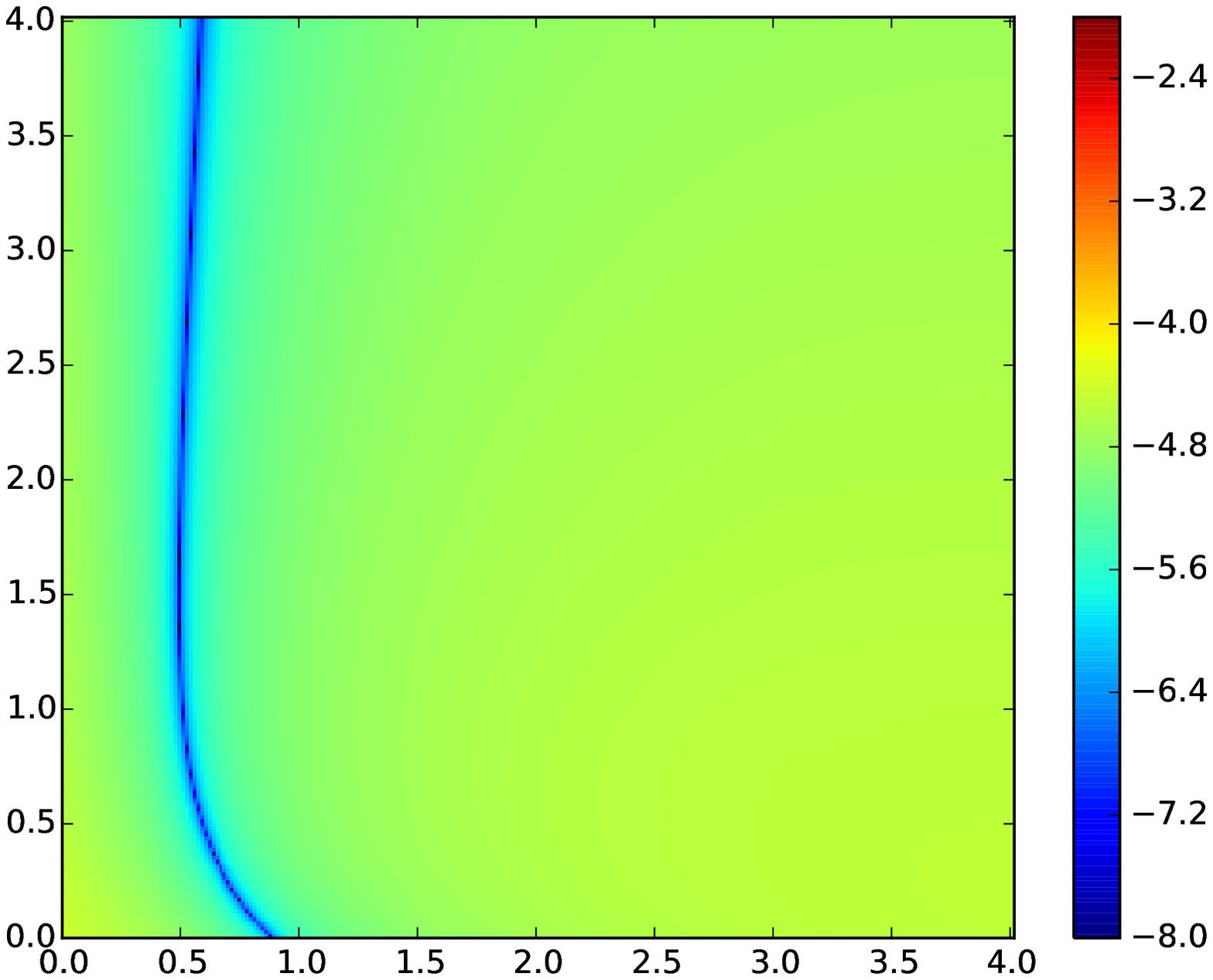}
 \end{tabular}
 \end{center}
 \caption{The smallest singular value (rescaled by the largest ) of the
 product of solutions matrices $\Wq(xq_1,yq_2)$ for the graph in
 figure~\ref{fig:qsample}(b) and $(x,y) \in [0,4]^2$. The Schr\"odinger
 potential $q_1$ (resp. $q_2$) is the characteristic function of the red
 (resp. blue) node. The color scale on the right corresponds to
 $\log_{10}(\sigma_{\min}/\sigma_{\max})$.  }\label{fig:qzero}
 \end{figure}

\subsection{Statistical study of recoverability}
\label{sec:stat}
A natural question to ask is how likely is it for a graph to be
recoverable? To answer this question we performed the following
statistical studies. 

\subsubsection{Conductivity problem} We used one of the
Erd\H{o}s-R\'enyi random graph models \cite{Erdos:1959:ORG} to generate
graphs that have a fixed number of edges $|E|$ and vertices. In this
model $|E|$ edges are drawn (uniformly) from all possible $n(n-1)/2$
edges on a graph with $n$ vertices.   We choose this particular random
graph model because we would like to compare problems with the same
number of degrees of freedom. Concretely, the statistical study involved
varying two parameters (while keeping $|E|$ fixed): the first is the
number of internal nodes $|I|$ and the second the number of boundary
nodes $|B|$. To ensure the Dirichlet problem is well posed we draw only
graphs that are connected and whose restriction to the interior nodes is
connected as well.  For each combination of $|I|$ and $|B|$, we draw up
to $200$ random graphs connected as a whole and restricted to $I$. In
each draw we make up to $20$ trials to make sure the Dirchlet problem is
well-posed. The image shown in figure~\ref{fig:stat}(a) represents the
(empirical) probability of recoverability for a {\em connected} graph
(as a whole and restricted to the interior nodes) with $|E|=21$, and
$|B|$, $|I|$ given.

There is one phase transition as we vary $|B|$ that can be explained by
a counting argument. Indeed the DtN map $\Lambda_{\gamma,0}$ is a
symmetric matrix with zero row sums, and is thus determined by
$|B|(|B|-1)/2$ scalars. So if $|B|(|B|-1)/2 < |E|$ we do not have enough
data to recover all edges in the graph. We also see that if there are no
interior nodes, then the problem can be solved almost always. However as
we increase the number of internal nodes, the graphs on which the
conductivity problem is recoverable become scarce. This may be because
the random graph model we chose is not adapted to study this particular
recoverability problem. We point out that the critical circular
planar graphs \cite{Curtis:1998:CPG} have much more internal nodes, so
recoverability for conductivities seems to be a very rare property (for
$|B| = 7$, a critical planar graph called ``pyramidal network''
\cite{Curtis:1998:CPG} has $8$ interior nodes).

\subsubsection{Schr\"odinger problem} Here we use the other
Erd\H{o}s-R\'enyi random graph model \cite{Erdos:1959:ORG} to generate
graphs with a fixed number of vertices, but where the probability of
having an edge between two vertices is $p$. The statistical study
consisted of keeping $|I|$ fixed (i.e. the number of degrees of freedom
in the Schr\"odinger potential $q$) and changing two parameters: one is
the probability $p$ of having an edge between any two nodes and the
other the number of boundary nodes $|B|$. The results are displayed in
figure~\ref{fig:stat}(b). As for the conductivity problem we display
(out of a maximum $200$ trials) the (empirical) probability of
recoverability for a {\em connected} graph (as a whole and restricted to
the interior nodes) with $|I|=21$ and $|B|$, $p$ given.

We observe two phase transition like properties. The first one follows
from a counting argument: if there is not enough data to recover the
unknowns then the problem is not recoverable. To be more precise, the
DtN map $\Lambda_{\gamma,q}$ is a symmetric $|B| \times |B|$ matrix that
is determined by $|B|(|B|+1)/2$ scalars. So if $|B|(|B|+1)/2 < |I|$,
then the problem is not recoverable. The second is that for
probabilities in what appears to be a symmetric interval about $1/2$,
the Schr\"odinger problem is almost always recoverable, whereas
elsewhere it is almost always not recoverable. We only have a heuristic
explanation for this: if the graph has too little edges, it may not be
possible to access certain internal nodes reliably. Also if the graph
has too many edges (say it is the complete graph), then it becomes
impossible to distinguish the current that flows through the leak from
the currents between an internal node an its neighbors.  Finally, the
interval of edge probabilities for which the Schr\"odinger problem is
recoverable becomes larger as we increase $|B|$. This confirms the
intuition that the larger $|B|$ is, the more data we have and the more
likely it is for us to find a recoverable graph.

\begin{figure}
 \begin{center}
 \begin{tabular}{cc}
 \raisebox{6em}{\rotatebox{90}{$|B|$}}
 \includegraphics[width=0.41\textwidth]{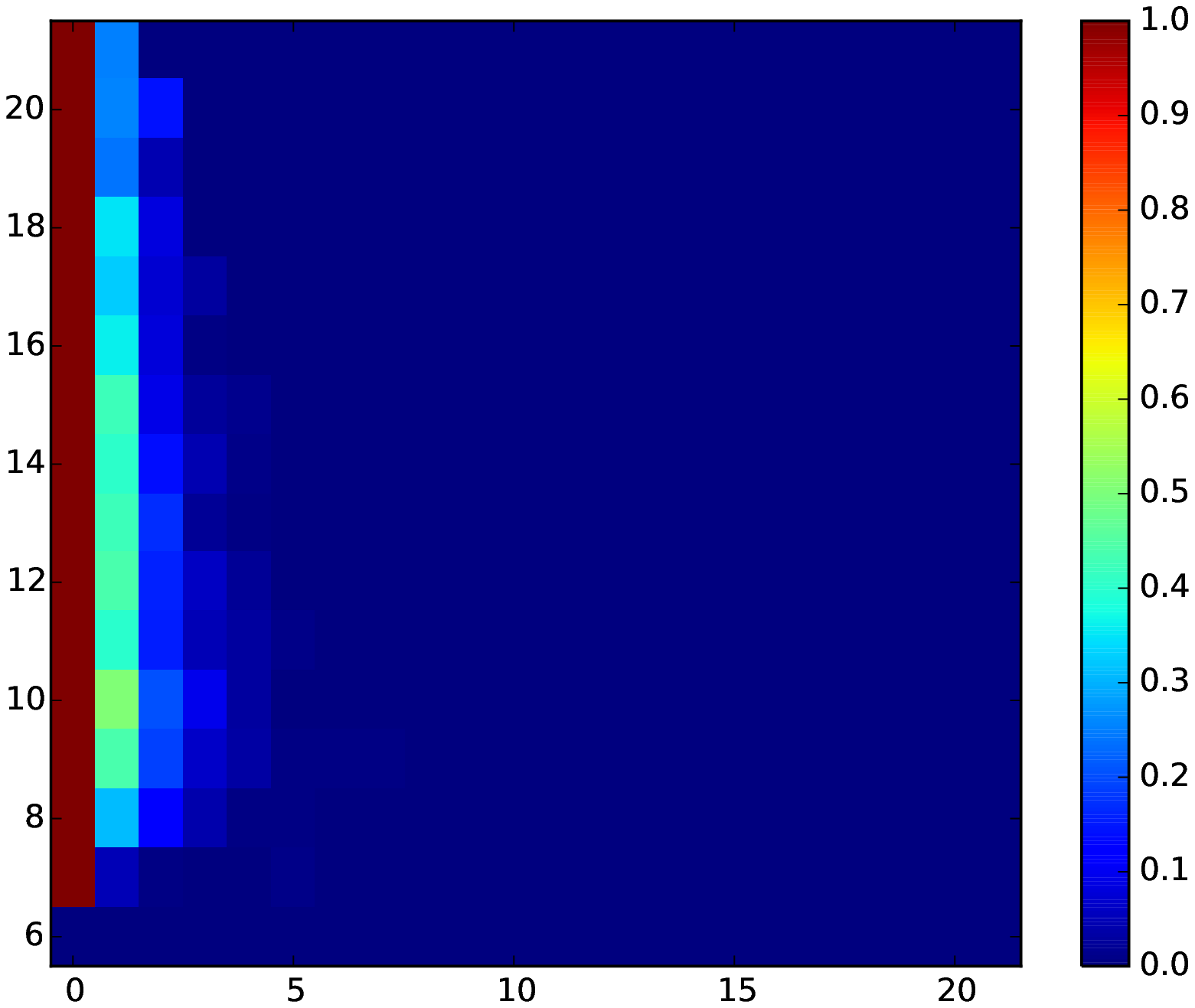} & 
 \raisebox{6em}{\rotatebox{90}{$|B|$}}
 \includegraphics[width=0.41\textwidth]{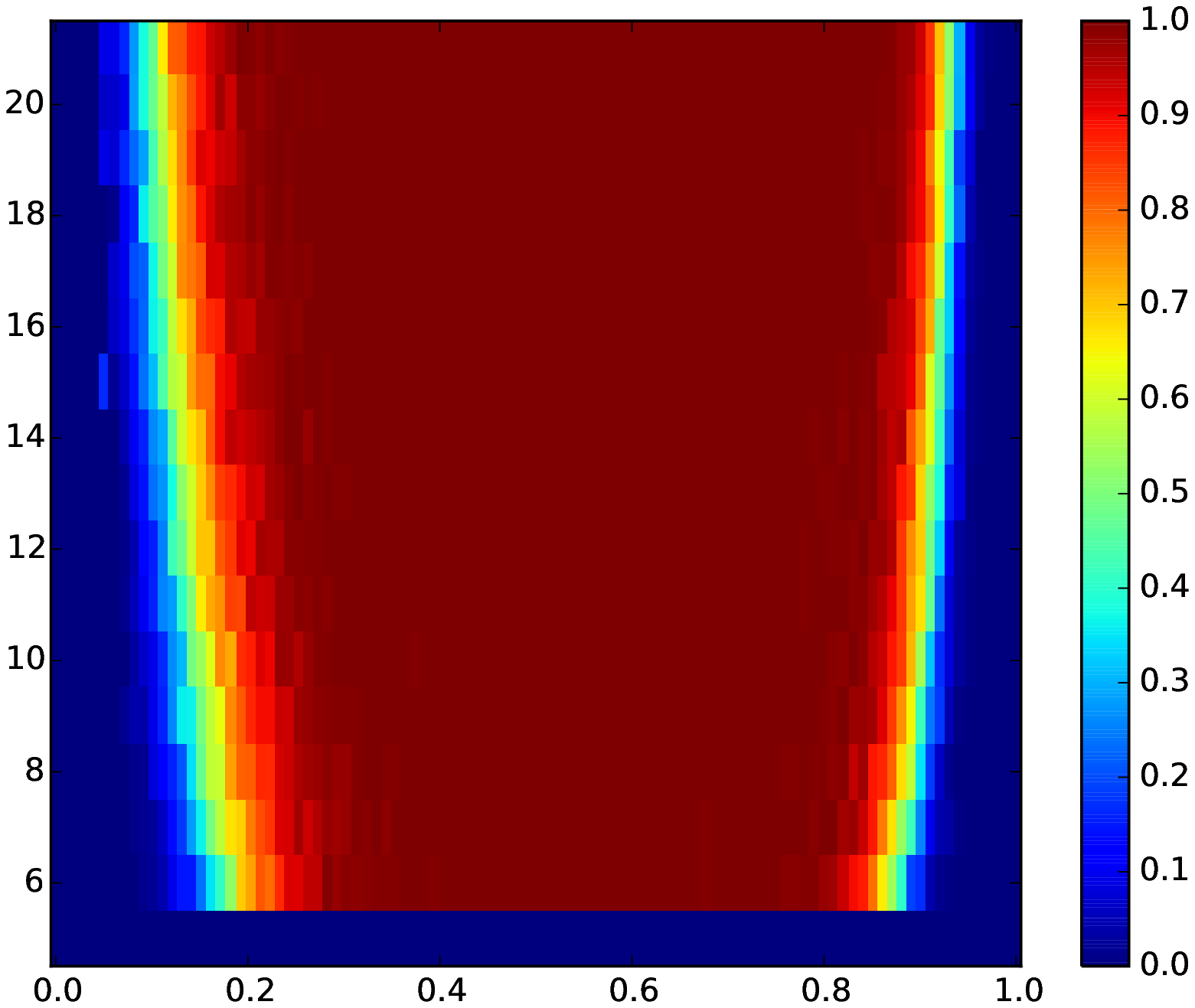} \\[-0.8em]
 $|I|$ & $p$\\
 (a) Conductivity with $|E|=21$ & (b) Schr\"odinger with $|I|=21$
 \end{tabular}
 \end{center}
 \caption{Statistical study of recoverability.}
 \label{fig:stat}
\end{figure}

\section{Discussion and future work}
\label{sec:discussion}
Our results generalize existing solvability results for the conductivity
and Schr\"odinger problems on graphs
\cite{Curtis:1990:DRN,Curtis:1994:FCC,Colin:1994:REP,Curtis:1998:CPG,Chung:2005:HFI,Chung:2010:IRE,Lam:2012:IPC,Arauz:2014:DRM,Arauz:2015:OPB,
Arauz:2015:DRM} by relaxing what is meant by solvability and allowing
conductivities (or Schr\"odinger potentials) in zero measure sets to
have the same boundary data. Thus the problem is ill-posed. If errors
are present in the measurements, the ill-posedness is worse since these
zero measure sets ``expand'' to positive measure sets. This is to be
expected as even when the discrete conductivity problem is known to be
solvable (e.g. the circular planar graph case) the problem become
increasingly ill-posed with the size of the network. Studying
regularization techniques to deal with this ill-posedness is left for
future work.

The complex geometric approach
\cite{Sylvester:1987:GUT,Calderon:1980:IBV} has been successfully used
to prove uniqueness for various continuum inverse problems
\cite{Uhlmann:1999:DIP}. We believe that the approach presented here can
also be extended to other discrete inverse problems, such as the inverse
problem of finding the spring constants, masses and dampers in a network
of springs, masses and dampers from displacement and force measurements
at a few nodes that are accessible.

Finally we would like to use the theoretical results presented here to
generalize the numerical methods in \cite{Borcea:2012:RNA} to deal with
the electrical impedance tomography problem and other related inverse
problems in setups that are not necessarily 2D and that involve complex
valued quantities. The observation in the numerics (\S7) that graphs on
which the Schr\"odinger problem is solvable are much more common than
graphs on which the conductivity problem is solvable indicates that to
generalize the methods in \cite{Borcea:2012:RNA} it may be beneficial to
first transform the inverse problem at hand to Schr\"odinger form (via
e.g. the Liouville identity \cite{Sylvester:1987:GUT}), image
a Schr\"odinger potential and then go back to the original quantity of
interest (e.g. the conductivity).

\section*{Acknowledgements}
This work was partially supported by the National Science Foundation
grant DMS-1411577. FGV is grateful to Alexander Mamonov for inspiring
conversations on network inverse problems during the MSRI special
semester on inverse problems (Fall 2010).  

\appendix
\section{Proof of uniqueness a.e. for Schr\"odinger
potentials}
\label{sec:qbur}
We now proceed with the proof of the uniqueness a.e. result
(theorem~\ref{thm:bur}). The proof is essentially the same as that for
the conductivity case in \S\ref{sec:sbur}, and is included here for
completeness. We start by noticing that the product of solutions
subspace is spanned by a finite number of vectors.  This is the
objective of the following lemma, which is stated without proof because
of its similarity to lemma~\ref{lem:fins}.
\begin{lemma}\label{lem:finq}
 Assume the $\gamma,q_1$ and $\gamma,q_2$ Dirichlet problems are
 well-posed.  Let $u^{(i)}$ (resp. $v^{(i)}$) be $\gamma,q_1$ (resp.
 $\gamma,q_2$) harmonic with boundary data $u^{(i)}_B = v^{(i)}_B =
 e_i$, where $\{e_i\}_{i\in B}$ is the canonical basis of $\complex^B$.
 Then the product of solutions subspace $\Pq(q_1,q_2)$ is
\begin{equation}
 \Pq(q_1,q_2) = \linspan \Mcb{ u_I^{(i)} \odot v_I^{(j)}}_{i,j \in B}.
 \label{eq:finq}
\end{equation}
\end{lemma}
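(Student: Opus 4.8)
The plan is to imitate, almost verbatim, the proof of Lemma~\ref{lem:fins}, replacing gradients of solutions with interior restrictions of solutions and the Dirichlet problem for $\gamma_k,0$ by the Dirichlet problem for $\gamma,q_k$. First I would observe the easy inclusion: each vector $u_I^{(i)}\odot v_I^{(j)}$ is, by definition, one of the generators of $\Pq(q_1,q_2)$, since $u^{(i)}$ is $\gamma,q_1$ harmonic and $v^{(j)}$ is $\gamma,q_2$ harmonic; hence $\linspan\{u_I^{(i)}\odot v_I^{(j)}\}_{i,j\in B}\subset\Pq(q_1,q_2)$.

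For the reverse inclusion, take an arbitrary generator $w=u_I\odot v_I$ of $\Pq(q_1,q_2)$, where $u$ is $\gamma,q_1$ harmonic and $v$ is $\gamma,q_2$ harmonic. Since the $\gamma,q_1$ and $\gamma,q_2$ Dirichlet problems are well-posed by hypothesis, the maps $u_B\mapsto u$ and $v_B\mapsto v$ are linear, so $u=\sum_{i\in B}u_B(i)\,u^{(i)}$ and $v=\sum_{j\in B}v_B(j)\,v^{(j)}$; restricting to $I$ gives $u_I=\sum_i u_B(i)\,u_I^{(i)}$ and $v_I=\sum_j v_B(j)\,v_I^{(j)}$. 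Using bilinearity of the Hadamard product,
\[
 w = u_I\odot v_I = \sum_{i,j\in B} u_B(i)\,v_B(j)\,\bigl(u_I^{(i)}\odot v_I^{(j)}\bigr),
\]
so $w\in\linspan\{u_I^{(i)}\odot v_I^{(j)}\}_{i,j\in B}$. Since $\Pq(q_1,q_2)$ is spanned by such $w$, this yields $\Pq(q_1,q_2)\subset\linspan\{u_I^{(i)}\odot v_I^{(j)}\}_{i,j\in B}$, and combining the two inclusions gives \eqref{eq:finq}.

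There is no real obstacle here: the only ingredient beyond elementary linear algebra is well-posedness of the two Dirichlet problems, which is assumed in the statement and guarantees that the basis solutions $u^{(i)}$, $v^{(j)}$ exist and that every solution is their unique linear combination. The one point to be slightly careful about is that the Hadamard product in $\complex^I$ is bilinear (not linear), which is why the double sum over $i,j\in B$ appears; this is exactly parallel to the double sum in Lemma~\ref{lem:fins}.
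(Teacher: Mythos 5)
Your proof is correct and follows essentially the same route as the paper, which omits the proof of this lemma precisely because it is the verbatim adaptation of the proof of Lemma~\ref{lem:fins} (replacing $\DG u$ by $u_I$ and the $\gamma_k,0$ Dirichlet problems by the $\gamma,q_k$ ones). Your explicit expansion of the double sum via bilinearity of the Hadamard product is exactly the intended argument.
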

The product of solutions space \eqref{eq:finq} can be rewritten as the
range of a matrix,
\begin{equation}
 \Pq(q_1,q_2) = \colspace ( \Wq(q_1,q_2) ),
\end{equation}
where the matrix $\Wq(q_1,q_2) \in \complex^{|I| \times |B|^2}$ is the
matrix with columns being all the possible Hadamard products of the
columns of $(L_{II} + \diag(q_1))^{-1} L_{IB}$ and $(L_{II} +
\diag(q_2))^{-1} L_{IB}$, i.e.
\begin{equation}\label{eq:qprodmat}
[\Wq(q_1,q_2)]_{:,i+(j-1)|B|} = \Mb{(L_{II} + \diag(q_1))^{-1} L_{IB} }_{:,i} \odot
\Mb{(L_{II} + \diag(q_2))^{-1} L_{IB} }_{:,j},
\end{equation}
and the subscript $\gamma$ in the weighted graph Laplacian $L_\gamma$
has been omitted for clarity. 
As in the conductivity case, we want to characterize whether the
products of solutions span $\complex^I$ or not. One way to do this is to
look at the determinants
\begin{equation}\label{eq:f} 
\fq_\alpha(q_1,q_2) = \det [\Wq(q_1,q_2)]_{:,\alpha} 
\end{equation} 
where the matrix
$[\Wq(q_1,q_2)]_{:,\alpha}$ is the $|I| \times |I|$ submatrix of
$\Wq(q_1,q_2)$ obtained by selecting the $|I|$ columns corresponding to
the multi-index $\alpha \in \{1,\ldots,|B|^2\}^{|I|}$. Clearly
$\Pq(q_1,q_2) = \complex^I$ if and only if there is an $\alpha \in
\{1,\ldots,|B|^2\}^{|I|}$ for which $\fq_{\alpha}(q_1,q_2) \neq 0$. If
$|B|^2<|I|$, then we always have repeated columns and the determinants
are zero.  When $|B|^2 \geq |I|$, the determinant properties guarantee
that it is enough to check the $\binom{|B|^2}{|I|}$ choices of columns
$\alpha = (\alpha_1,\ldots,\alpha_{|I|})$, with $1 \leq \alpha_1 <
\ldots < \alpha_{|I|} \leq |B|^2$. This observation is summarized in the
following lemma.

\begin{lemma}\label{lem:subq}
Assume the Dirichlet problems for $\gamma,q_1$ and $\gamma,q_2$ are
well-posed. The following statements are equivalent
\begin{itemize}
 \item[i.] $\Pq(q_1,q_2) = \complex^I$
 \item[ii.] $\rank \Wq(q_1,q_2) = |I|$
 \item[iii.] $\fq_\alpha(q_1,q_2) \neq 0$ for some $\alpha \in
 \{1,\ldots,|B|^2\}^{|I|}$ with $1 \leq \alpha_1 < \ldots < \alpha_{|I|}
 \leq |B|^2$.
\end{itemize}
\end{lemma}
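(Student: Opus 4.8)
The plan is to prove Lemma~\ref{lem:subq} by establishing the equivalences in a cycle, exactly mirroring the argument for Lemma~\ref{lem:subs} in \S\ref{sec:sbur}. The only change is notational: replace $\complex^E$ by $\complex^I$, the matrix $\Ws$ by $\Wq$, the index set $E$ by $I$, and invoke Lemma~\ref{lem:finq} in place of Lemma~\ref{lem:fins}, together with the assumption that the $\gamma,q_1$ and $\gamma,q_2$ Dirichlet problems are well-posed so that $\Wq(q_1,q_2)$ is defined.

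First I would show (i) $\Leftrightarrow$ (ii). By Lemma~\ref{lem:finq}, $\Pq(q_1,q_2) = \linspan\Mcb{u_I^{(i)} \odot v_I^{(j)}}_{i,j\in B}$, and by the definition \eqref{eq:qprodmat} these spanning vectors are precisely the columns of $\Wq(q_1,q_2)$. Hence $\Pq(q_1,q_2) = \colspace(\Wq(q_1,q_2))$, and this subspace of $\complex^I$ equals all of $\complex^I$ if and only if $\Wq(q_1,q_2)$ has rank $|I|$ (its row dimension).

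Next I would show (ii) $\Leftrightarrow$ (iii). A matrix has rank equal to its number of rows $|I|$ if and only if it contains some $|I|\times|I|$ submatrix with nonzero determinant, i.e. if and only if there is a choice of $|I|$ columns, indexed by a multi-index $\alpha \in \{1,\ldots,|B|^2\}^{|I|}$, for which $\fq_\alpha(q_1,q_2) = \det[\Wq(q_1,q_2)]_{:,\alpha} \neq 0$. Moreover, if two entries of $\alpha$ coincide the corresponding submatrix has a repeated column and its determinant vanishes, and permuting the columns of $\alpha$ only changes the determinant by a sign; hence it suffices to consider strictly increasing multi-indices $1 \leq \alpha_1 < \ldots < \alpha_{|I|} \leq |B|^2$, of which there are $\binom{|B|^2}{|I|}$. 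When $|B|^2 < |I|$ no such $\alpha$ exists, so all the statements fail simultaneously, which is consistent. This closes the cycle and proves the lemma. There is no real obstacle here; the content is entirely routine linear algebra, and the lemma is stated precisely so that the analyticity machinery of Lemma~\ref{lem:sfanalytic} (and its Schr\"odinger analogue) can later be applied to the finitely many functions $\fq_\alpha$ in the proof of Theorem~\ref{thm:bur}.
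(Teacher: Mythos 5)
Your proof is correct and follows exactly the reasoning the paper gives: the paper states this lemma without a formal proof, presenting it as a summary of the preceding observation that $\Pq(q_1,q_2)=\colspace(\Wq(q_1,q_2))$ (via Lemma~\ref{lem:finq}) together with the standard determinantal characterization of full row rank, including the reduction to strictly increasing multi-indices and the degenerate case $|B|^2<|I|$. Nothing is missing and nothing differs in substance.
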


Hence we have the following.

\begin{lemma}\label{lem:qfanalytic}
Let $\gamma \in \complex_0^E$ and $\zeta$ be such that the Dirichlet
problem for $\gamma,q$ is well-posed for all $q \in \complex_\zeta^I$.
The functions $\fq_\alpha: \complex^I_\zeta \times \complex^I_\zeta \to
\complex$ are analytic for any $\alpha \in \{0,\ldots,|B|^2\}^{|I|}$.
\end{lemma}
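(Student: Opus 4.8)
The plan is to mirror the proof of Lemma~\ref{lem:sfanalytic} exactly, replacing the role of $(L_\gamma)_{II}$ by $(L_\gamma)_{II}+\diag(q)$. First I would fix $\gamma \in \complex_0^E$ and $\zeta$ as in the hypothesis, so that by definition the matrix $(L_\gamma)_{II}+\diag(q)$ is invertible for every $q \in \complex^I_\zeta$; in particular $\det((L_\gamma)_{II}+\diag(q_k)) \neq 0$ for $k=1,2$ whenever $(q_1,q_2) \in \complex^I_\zeta \times \complex^I_\zeta$. The point is that the entries of $(L_\gamma)_{II}+\diag(q_k)$ are affine (hence polynomial, hence analytic) functions of the coordinates of $q_k$, and the determinant is a polynomial in those entries, so $q_k \mapsto \det((L_\gamma)_{II}+\diag(q_k))$ is an everywhere-analytic function that is nonvanishing on $\complex^I_\zeta$.

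Next I would invoke the cofactor formula for the inverse: each entry of $((L_\gamma)_{II}+\diag(q_k))^{-1}$ is a ratio of a polynomial in the entries of $(L_\gamma)_{II}+\diag(q_k)$ (a cofactor) over the determinant. Since the denominator is nonzero throughout $\complex^I_\zeta$, each such entry is a rational function with no poles on $\complex^I_\zeta$ and is therefore analytic there as a function of $q_k$. Multiplying on the right by the fixed matrix $(L_\gamma)_{IB}$ preserves analyticity, so every entry of $((L_\gamma)_{II}+\diag(q_k))^{-1} (L_\gamma)_{IB}$ is analytic on $\complex^I_\zeta$, and hence jointly analytic on $\complex^I_\zeta \times \complex^I_\zeta$ when we regard it as depending on $(q_1,q_2)$ (it depends on only one of the two blocks).

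Then, by the definition \eqref{eq:qprodmat}, the matrix $\Wq(q_1,q_2)$ is built by taking columnwise Hadamard products of columns of $((L_\gamma)_{II}+\diag(q_1))^{-1} L_{IB}$ with columns of $((L_\gamma)_{II}+\diag(q_2))^{-1} L_{IB}$; since products of analytic functions are analytic, every entry of $\Wq(q_1,q_2)$ is analytic on $\complex^I_\zeta \times \complex^I_\zeta$. Finally, $\fq_\alpha(q_1,q_2) = \det [\Wq(q_1,q_2)]_{:,\alpha}$ is a polynomial in the entries of $\Wq(q_1,q_2)$, and a polynomial composed with analytic functions is analytic, so each $\fq_\alpha$ is analytic on $\complex^I_\zeta \times \complex^I_\zeta$, as claimed.

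Honestly there is no real obstacle here: the argument is word-for-word the one already given for Lemma~\ref{lem:sfanalytic}, with the only substantive point being that the well-posedness hypothesis on $\zeta$ (rather than the $\complex_0^E$ condition used in the conductivity case, via Theorem~\ref{thm:dir}) is precisely what guarantees the determinant in the denominator never vanishes on $\complex^I_\zeta$. The one bookkeeping subtlety worth stating explicitly is that $\fq_\alpha$ is a function of the pair $(q_1,q_2)$ but is really a product of something depending only on $q_1$ with something depending only on $q_2$, so joint analyticity on the product domain is immediate once analyticity in each block separately is established.
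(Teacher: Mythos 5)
Your proposal is correct and follows essentially the same argument as the paper's proof: the cofactor formula makes the entries of $((L_\gamma)_{II}+\diag(q_k))^{-1}$ rational functions with nonvanishing denominator on $\complex^I_\zeta$ (by the well-posedness hypothesis), and analyticity is preserved under the Hadamard products forming $\Wq$ and the final determinant. The extra remarks on joint analyticity in $(q_1,q_2)$ are a harmless elaboration of what the paper leaves implicit.
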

\begin{proof}
Let $q_1,q_2 \in \complex^I$.  First note that the entries of the
matrices $((L_\gamma)_{II} + \diag(q_k))^{-1}$, $k=1,2$, are complex
analytic on $q_1$ and $q_2$ when $q_1,q_2\in \complex^I_\zeta$. This is
because the cofactor formula for the inverse guarantees that the entries
of the matrices in question are rational functions in $q_1,q_2$ which
are analytic provided $\det ((L_\gamma)_{II} + \diag(q_k)) \neq 0$,
$k=1,2$.  As in the proof of theorem~\ref{thm:dir}, the condition
$q_1,q_2 \in \complex^I_\zeta$ ensures that $\det ((L_\gamma)_{II} +
\diag(q_k)) \neq 0$, $k=1,2$.  Since $\Wq(q_1,q_2)$ is obtained from
$((L_\gamma)_{II} + \diag(q_k))^{-1}(L_\gamma)_{IB}$, $k=1,2$ by taking
columnwise Hadamard products, each entry of $\Wq(q_1,q_2)$ is also
analytic on $q_1$ and $q_2$ when $q_1,q_2\in \complex^I_\zeta$. Finally
taking the determinant of a matrix with analytic entries is also
analytic. 
\end{proof}

We  can now proceed with the proof of the main result in this section.

\begin{proof}[Proof of Theorem~\ref{thm:bur}]
By lemma~\ref{lem:subq}, the theorem hypothesis means that there is
some multi-index $\beta \in \{1,\ldots,|B|^2\}^{|I|}$ such that
\[
 \fq_\beta(p_1,p_2) \neq 0, ~\text{for some $p_1,p_2 \in
 \complex^{|I|}_\zeta$}.
\] 
Thus $\fq_\beta$ is not identically zero. In fact its zero set restricted
to $\complex^{|I|}_\zeta \times \complex^{|I|}_\zeta$ 
\begin{equation}
Z(\fq_\beta) = \Mcb{ (q_1,q_2) \in \complex^{|I|}_\zeta \times
\complex^{|I|}_\zeta ~ | ~ \fq_\beta(q_1,q_2) = 0 },
\label{eq:zerosetq}
\end{equation}
must be a set of measure zero (see e.g.
\cite{Gunning:1965:AFS}), where we use the Lebesgue measure on
$\complex^{|I|} \times \complex^{|I|}$. By lemma~\ref{lem:subq},
the subset $S$ of $\complex^I_\zeta \times \complex^I_\zeta$ on which $\Pq(q_1,q_2)
\neq \complex^I$ is
\[
 S = \bigcap_{\alpha \in \{1,\ldots,|B|^2\}^{|I|}} Z(\fq_\alpha).
\]
Since $Z(\fq_\beta)$ has measure zero, the set $S \subset Z(\fq_\beta)$ must
have measure zero as well by monotonicity of the Lebesgue measure.
\end{proof}

\bibliographystyle{siamurl}
\bibliography{schroe}
\end{document}